\documentclass[12pt]{amsart}
\usepackage{amsmath,amsthm,amsfonts,amssymb,times,latexsym,mathabx,url}

\newtheorem{theorem}{Theorem}[section]

\newtheorem{lem}[theorem]{Lemma}

\numberwithin{equation}{section}

\voffset=-10mm
\oddsidemargin=0mm
\evensidemargin=0mm
\textwidth=160mm
\textheight=220mm

\renewcommand{\a}{\alpha}

\renewcommand{\d}{\delta}

\renewcommand{\l}{\lambda}
\renewcommand{\o}{\omega}
\renewcommand{\O}{\Omega}

\newcommand{\N}{\mathbb{N}}

\newcommand{\s}{\sigma}

\newcommand{\Z}{\mathbb{Z}}

\renewcommand{\leq}{\leqslant}
\renewcommand{\geq}{\geqslant}

 \begin{document}

\title{Numbers of the form $k+f(k)$}
\author{Mikhail R. Gabdullin, Vitalii V. Iudelevich, Florian Luca}

\address{Steklov Mathematical Institute,
	Gubkina str., 8, Moscow, Russia, 119333}
\email{gabdullin.mikhail@yandex.ru}

\address{Moscow State University, Leninskie Gory str., 1, Moscow, Russia, 119991}
\email{vitaliiyudelevich@mail.ru} 

\address{School of Maths, Wits University, Private Bag 3, Wits 2050, South Africa and Centro de Ciencias Matem\'aticas UNAM, Morelia, Mexico}
\email{florian.luca@wits.ac.za}

\begin{abstract}
For a function $f\colon \N\to\N$, let 
$$
N^+_f(x)=\{n\leq x: n=k+f(k) \mbox{ for some } k\}.
$$
Let $\tau(n)=\sum_{d|n}1$ be the divisor function, $\omega(n)=\sum_{p|n}1$ be the prime divisor function, and $\varphi(n)=\#\{1\leq k\leq n: \gcd(k,n)=1 \}$ be Euler's totient function. We show that
\begin{align*}
&(1) \quad x \ll N^+_{\o}(x), \\ 
&(2) \quad x\ll N^+_{\tau}(x) \leq 0.94x, \\
&(3) \quad x \ll N^+_{\varphi}(x) \leq 0.93x.
\end{align*}
\end{abstract}

\date{\today}
\maketitle

\section{Introduction}

For a function $f\colon \N\to\N$, let 
$$
N^+_f(x):=\{n\leq x: n=k+f(k) \mbox{ for some } k\}.
$$
In this paper, we are interested in lower and upper bounds for $N^+_f(x)$ for $f\in\{\tau,\o,\varphi \}$; here, $\o(n)$ denotes the number of prime divisors of a positive integer $n$, $\tau(n)$ is the divisor function, and $\varphi(n)=\#\{k\leq n: \gcd(k,n)=1\}$ is the Euler totient function. This question for $f=\o$ dates back to the work \cite{ESP3} (see also \cite{ESP1} and \cite{ESP2}) of Erd\H{o}s, S\'ark\"ozy, and Pomerance who showed (see Corollary after Theorem 3.1) that
$$
N^+_{\o}(x)\gg x.
$$ 
However, obtaining upper bounds for $N^+_{\o}(x)$ seems to be a much more complex problem. The best current upper bound  
$$
x-N^+_{\o}(x) \gg \frac{x}{\log\log x}
$$
is due to Kucheriaviy \cite{Kuch}. We also note that the problem of showing that there are infinitely many numbers which cannot be represented in the form $k+\tau(k)$ appeared in 2018 at the entrance exam into the Yandex School of Data Analysis; see \cite[Problem 8]{SDA}.

Numerical calculations predict that, for large $x$,
$$
N^+_{\o}(x)\approx 0.73x,
$$
$$
N^+_{\tau}(x)\approx 0.67x,
$$
$$
N^+_{\varphi}(x)\approx 0.37x.
$$
Our results are the following. Firstly, by a more general and transparent argument, we recover the aforementioned bound from \cite{ESP3}. 

\begin{theorem}\label{th1}
We have
$$
N^+_{\o}(x) \gg x.
$$	
\end{theorem}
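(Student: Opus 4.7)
The approach is a second-moment / Cauchy--Schwarz argument. For each positive integer $n$, set
$$
R(n):=\#\{k\le x:k+\omega(k)=n\},
$$
so that $\sum_n R(n)=\lfloor x\rfloor$ and $N^+_\omega(x)=\#\{n:R(n)\ge 1\}$. Cauchy--Schwarz gives
$$
N^+_\omega(x)\ge\frac{\bigl(\sum_n R(n)\bigr)^2}{\sum_n R(n)^2},
$$
reducing the theorem to the upper bound $\sum_n R(n)^2\ll x$. Expanding the square and parametrising the off-diagonal by $h=k_2-k_1\ge 1$, which forces $\omega(k_1)-\omega(k_2)=h$ and hence $h\le \log x/\log 2$, one obtains
$$
\sum_n R(n)^2=\lfloor x\rfloor+2\sum_{h\ge 1}S(h),\qquad S(h):=\#\{k\le x-h:\omega(k)-\omega(k+h)=h\}.
$$

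The underlying heuristic is that $(\omega(k),\omega(k+h))$ is jointly asymptotically Gaussian with variance scale $\log\log x$, so one expects
$$
S(h)\ll\frac{x}{\sqrt{\log\log x}}\exp\!\Bigl(-\frac{c\,h^2}{\log\log x}\Bigr),
$$
and $\sum_{h\ge 1}S(h)\ll x$ by Gaussian summation. To realize this rigorously I would split at $H\asymp\log\log x$: for $h\ge H$, the defining equation forces $|\omega(k)-\log\log x|\ge h/2$ or $|\omega(k+h)-\log\log x|\ge h/2$, so the Tur\'an--Kubilius inequality $\sum_{k\le x}(\omega(k)-\log\log x)^2\ll x\log\log x$ gives $S(h)\ll x\log\log x/h^2$, whose tail sum is $O(x)$.

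The decisive regime is $1\le h<H$, where Tur\'an--Kubilius is too crude and one needs the displayed Gaussian-type bound itself. The plan is to extract it from a bivariate local limit theorem / Selberg--Delange-type estimate for the joint distribution of $(\omega(k),\omega(k+h))$ on $k\le x$, obtained by analysing the multiplicative structure of $k(k+h)$ (equivalently, via a Kubilius model for this correlated pair of additive functions). Establishing this uniform local-limit estimate with the correct shift-dependent Gaussian decay is the principal technical obstacle, and is presumably the place where the authors' ``more general and transparent'' argument enters: any such framework that handles pairs of additive functions on nearby arguments $k, k+h$ will feed into the Cauchy--Schwarz skeleton above and deliver the lower bound $N^+_\omega(x)\gg x$.
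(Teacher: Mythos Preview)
Your Cauchy--Schwarz skeleton and the tail bound for $h\ge H\asymp\log\log x$ via Tur\'an--Kubilius are sound, and the paper too closes with Cauchy--Schwarz. The genuine gap is precisely where you locate it yourself: the range $1\le h<H$. You need
$$
\sum_{1\le h<H}\#\{k\le x:\omega(k)-\omega(k+h)=h\}\ll x,
$$
and a pointwise bound $S(h)\ll x/\sqrt{\log\log x}$ alone would only yield $x\sqrt{\log\log x}$ after summing over $h<H$, so the Gaussian factor $\exp(-ch^2/\log\log x)$ is essential. That amounts to a bivariate \emph{local} limit theorem for $(\omega(k),\omega(k+h))$, uniform in both the shift $h$ and the target value, which you do not supply and which is not a standard shelf item: Selberg--Delange treats a single multiplicative structure, while the pair $k,\,k+h$ has no joint Euler product. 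Your closing sentence conjectures that the authors' ``more general and transparent'' argument furnishes exactly this estimate --- it does not.

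Instead, the paper sidesteps the correlation problem altogether by passing to a dense subset $A=\{lp\le x: l\le x^{1/3}\text{ squarefree},\ \omega(l)\text{ near }\log\log x,\ p>x^{1/3}\text{ prime}\}$. On $A$ one has $\omega(lp)=\omega(l)+1$, so the energy equation becomes the linear equation $l_1p_1-l_2p_2=\omega(l_2)-\omega(l_1)$ in two prime unknowns, with right-hand side of size $O(\sqrt{\log\log x})$. Selberg's sieve bounds the number of prime pairs $(p_1,p_2)$ for each fixed $(l_1,l_2)$, and the remaining summation over $l_1,l_2$ reduces to elementary estimates for $\sum_i u^i/i!$ with $u\approx\log\log x$ and $i$ in an interval of length $\asymp\sqrt{\log\log x}$. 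The device is to make the ``random'' part of each integer a single large prime, converting the hard additive-correlation question you isolate into a routine sieve upper bound for linear equations in two primes.
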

 
The method works for other functions as well. 
 
\begin{theorem}\label{th2} 
We have	
$$
x\ll N^+_{\tau}(x)\leq 0.94x.
$$
\end{theorem}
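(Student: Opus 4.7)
The plan is to prove the two bounds separately: the lower bound by the Cauchy--Schwarz method used for $\o$ in Theorem~\ref{th1}, and the upper bound by an inclusion--exclusion / sieve argument producing an explicit positive-density set of non-representable integers.

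\textbf{Lower bound.} Set $r(n) := \#\{k \in \N : k + \tau(k) = n\}$, so that $N^+_{\tau}(x) = \#\{n \leq x : r(n) \geq 1\}$. Since $\tau(k) = O(k^{o(1)})$, the first moment satisfies $\sum_{n \leq x} r(n) = \#\{k : k + \tau(k) \leq x\} = x + O(1)$. By Cauchy--Schwarz,
\begin{equation*}
N^+_{\tau}(x) \ \geq\ \frac{\bigl(\sum_n r(n)\bigr)^2}{\sum_n r(n)^2}\ \gg\ \frac{x^2}{\sum_n r(n)^2},
\end{equation*}
so it is enough to show $\sum_n r(n)^2 \ll x$. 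Writing $r(n) = \sum_{j \geq 1} \mathbf{1}[\tau(n-j)=j]$ and expanding the square, the diagonal $j_1 = j_2$ contributes $\sum_n r(n) = O(x)$. The off-diagonal terms
\begin{equation*}
\sum_{j_1 \neq j_2} \#\{n \leq x : \tau(n - j_1) = j_1,\ \tau(n - j_2) = j_2\}
\end{equation*}
are handled via an asymptotic quasi-independence of $\tau$ at two nearby shifts, a consequence of Hardy--Ramanujan / Selberg--Sathe-type estimates applied through $\o$. Summed, they are $\ll x^{-1}\bigl(\sum_j |B_j|\bigr)^2 = O(x)$, where $B_j := \tau^{-1}(j) \cap [1, x]$.

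\textbf{Upper bound.} I plan to show $\geq 0.06\,x$ integers $n \leq x$ admit no representation $n = k + \tau(k)$. Since $n \in N^+_{\tau}(x)$ iff $n - j \in B_j$ for some $j \geq 1$, a Bonferroni inclusion--exclusion truncated at an even depth yields
\begin{equation*}
x - N^+_{\tau}(x) \ \geq\ x \prod_{j \leq J(x)} \Bigl(1 - \frac{|B_j|}{x}\Bigr) - o(x),
\end{equation*}
provided one has a quantitative sieve estimate
\begin{equation*}
\#\{n \leq x : \tau(n - j_i) = j_i,\ 1 \leq i \leq s\} \ =\ (1 + o(1))\, x \prod_{i=1}^{s} \frac{|B_{j_i}|}{x}
\end{equation*}
uniform for $s$ bounded and $j_i \leq J = J(x)$ with $J \to \infty$ slowly. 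Since $\sum_j |B_j|/x = 1 - o(1)$ while each $|B_j|/x \to 0$, the product converges (heuristically to $1/e \approx 0.37$); an explicit choice of $J$ makes it exceed $0.06$ for large $x$, giving $N^+_{\tau}(x) \leq 0.94\,x$.

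\textbf{Main obstacle.} In both directions the crux is the multi-point joint distribution of $\tau$, i.e.\ counting $n \leq x$ simultaneously satisfying several conditions $\tau(n - j_i) = j_i$. Pairwise quasi-independence is all that is needed for the lower bound, while the upper bound requires uniform $s$-fold estimates together with a matching choice of the cutoff $J(x)$ so that the Bonferroni remainder stays $o(x)$. Producing the explicit constant $0.94$ (as opposed to just some constant below $1$) is the delicate quantitative step.
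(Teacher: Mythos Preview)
Both halves of your plan rest on joint-distribution input for $\tau$ at several shifts that is not currently available, and the paper proceeds quite differently.

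\textbf{Lower bound.} You want to bound the full energy $\sum_n r(n)^2$ by showing that for $j_1\neq j_2$ the events $\tau(n-j_1)=j_1$ and $\tau(n-j_2)=j_2$ are ``quasi-independent'', citing Hardy--Ramanujan/Selberg--Sathe. Those results control the distribution of $\o(n)$ at a \emph{single} argument; they say nothing about the joint law of $\tau$ at two nearby points. Even a uniform upper bound of the form $\#\{n\leq x:\tau(n-j_1)=j_1,\tau(n-j_2)=j_2\}\ll |B_{j_1}|\,|B_{j_2}|/x$ is not known in the required generality (note that already $j_1=j_2=2$ is a twin-prime-type count). The paper explicitly avoids this: it restricts to a dense set $A=\{lp: l\text{ odd squarefree},\ \o(l)\in I,\ p\text{ prime}>y\}$, on which $\tau(lp)=2^{\o(l)+1}$, and bounds the energy of $A$ via Selberg's sieve applied to the linear equation $l_1p_1-l_2p_2=2^{\o(l_2)+1}-2^{\o(l_1)+1}$. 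The extra arithmetic input needed is the bound $\sum_{r\leq y}\bigl(\s(2^r-1)/(2^r-1)\bigr)^2\ll y$ from \cite{LS}. Your scheme, as written, has no substitute for this step.

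\textbf{Upper bound.} Your Bonferroni argument requires, for bounded $s$, asymptotics
\[
\#\{n\leq x:\tau(n-j_i)=j_i,\ i=1,\dots,s\}=(1+o(1))\,x\prod_{i=1}^s\frac{|B_{j_i}|}{x}
\]
uniformly in $j_1,\dots,j_s\leq J(x)$. Even for $s=2$ this is an open problem (cf.\ correlations of multiplicative functions at distinct shifts); for general $s$ it is well beyond reach. The product-to-$1/e$ heuristic is appealing but cannot currently be made rigorous. The paper instead exploits a mod~$3$ bias: it computes exactly
\[
\#\{k\leq x:k+\tau(k)\equiv0\pmod3\}=\Bigl(\tfrac13+\tfrac{\zeta(3)}{12\zeta(2)}+o(1)\Bigr)x,
\]
by Dirichlet-series analysis of the conditions $\tau(k)\not\equiv0\pmod3$ and the parity of a certain $W(k)$. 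Since only $(1-c+o(1))x$ values of $k$ land in the two nonzero classes mod~$3$, while those classes contain $2x/3+O(1)$ targets, at least $\bigl(\tfrac{\zeta(3)}{12\zeta(2)}-o(1)\bigr)x\geq0.06x$ of them are missed. This gives $N^+_\tau(x)\leq0.94x$ with no correlation input whatsoever.

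In short: the missing idea in your lower bound is restriction to a structured set on which $\tau$ is a simple function of $\o$ and the energy equation becomes a sieve problem; the missing idea in your upper bound is to replace unattainable independence estimates by a single, computable residue-class bias.
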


The upper bound here is due to the non-uniformity of $k+\tau(k)$ modulo $3$, namely, the fact that 
$$
\#\{k\leq x: k+\tau(k)\equiv0\pmod3\}=
\left(\frac{1}{3}+\frac{\zeta(3)}{12\zeta(2)}+o(1)\right)x.
$$
 
We also establish the following general estimate. 
 
\begin{theorem}\label{th3} 
Let $f\colon \N\to \Z$ be a function such that $0\leq f(k)\leq ck$ for some $c>0$. Then 
$$
x-N^+_f(x)=\#\{n\leq x: n\neq k+f(k) \} \geq \frac{1}{(2c+2)x}\sum_{k\leq x}f(k).
$$	
\end{theorem}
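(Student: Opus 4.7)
The plan is a first-moment (double-counting) argument. Write $g(k):=k+f(k)$, $F:=\sum_{k\leq x}f(k)$, $T:=\#\{n\leq x:n\neq k+f(k)\}$, and $M_x:=\{n\leq x:n=g(k)\text{ for some }k\}$, so that $|M_x|=N^+_f(x)$. Partition the integers $k\leq x$ as $C\sqcup B$ with $C:=\{k\leq x:g(k)\leq x\}$ and $B:=\{k\leq x:g(k)>x\}$, and set $b:=|B|$. Since $g(k)\geq k$, the image $M_x$ equals $g(C)$, so $|M_x|\leq|C|$, which gives $b\leq T$ at once.

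I would then evaluate $\sum_{k\leq x}g(k)$ in two ways. Directly, $\sum_{k\leq x}g(k)=\sum_{k\leq x}k+F$. On the other hand, splitting by whether $k\in B$ or $k\in C$: for $k\in B$, the trivial bound $g(k)\leq(1+c)k\leq(1+c)x$ yields $\sum_{k\in B}g(k)\leq(1+c)xb$; and for $k\in C$, grouping by the value $n=g(k)$ and writing $r(n):=\#\{k\leq x:g(k)=n\}$ for the multiplicity,
\[
\sum_{k\in C}g(k)=\sum_{n\in M_x}n\,r(n)=\sum_{n\in M_x}n+\sum_{n\in M_x}n\bigl(r(n)-1\bigr)\leq\sum_{n\leq x}n+x(T-b),
\]
using $n\leq x$ together with $\sum_{n\in M_x}(r(n)-1)=|C|-|M_x|=T-b$.

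Combining the two evaluations of $\sum_{k\leq x}g(k)$, the $\sum k$ and $\sum n$ terms cancel to give
\[
F\leq x(T-b)+(1+c)xb=xT+cxb\leq(1+c)xT,
\]
the last step using $b\leq T$. This already yields $T\geq F/((1+c)x)$, which is twice as strong as the claimed bound $T\geq F/((2c+2)x)$ and hence implies it. There is no real obstacle: the argument is entirely elementary, and the only care needed is bookkeeping the two ways the image of $g$ fails to cover $[1,x]$---the $b$ indices in $B$ whose image falls outside $[1,x]$, and the $T-b$ extra collisions among $k\in C$---both of which are bounded by $T$.
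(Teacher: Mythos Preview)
Your proof is correct and follows essentially the same double-counting strategy as the paper: evaluate $\sum_{k\leq x}(k+f(k))$ once directly and once via the image, then compare. The paper organizes the argument around the set $B_1(x)$ of preimages of \emph{uniquely} represented $n$, bounds $\sum_{k\notin B_1(x)}(k+f(k))\leq (c+1)x\cdot\#\{k\notin B_1(x)\}$, and then uses $\#\{k\notin B_1(x)\}\leq 2|A_0(x)|$; this is where the factor $2$ enters. You instead split the $k$'s by whether $g(k)\leq x$, keep the collision term $\sum_{n\in M_x}n(r(n)-1)$ explicit, and bound it by $x(T-b)$ rather than $(1+c)x(T-b)$. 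That tighter bookkeeping is exactly what saves the factor of $2$, giving $T\geq F/((1+c)x)$ rather than the paper's $T\geq F/((2c+2)x)$.
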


The bound in this theorem is tight (up to the constant $(2c+2)^{-1}$) in general, as one can see from the examples of $f(k)\equiv1$ or $f(k)=k$. However, it is not tight at all, say, for 
$$
f(k):=\begin{cases}
1, \mbox{if $k$ is odd},\\
0, \mbox{if $k$ is even.}
\end{cases}
$$

One of the applications of Theorem \ref{th2} is the case $f=\varphi$. Since 
$$
\sum_{k\leq x}\varphi(k)=\left(\frac{3}{\pi^2}+o(1)\right)x^2,
$$ 
we get
$$
N^+_{\varphi}(x)\leq \left(1-\frac{3}{4\pi^2}+o(1)\right)x \leq 0.93x
$$
for large $x$. However, some bounds for the limit distribution of $\varphi(k)/k$ should definitely give a better result. Let us denote 
$$
\Phi_x(\l)=\frac1x\#\{k\leq x: \varphi(k)/k \leq\l\}.
$$
It is known that $\Phi(\l):=\lim_{x\to\infty}\Phi_x(\l)$ exists for each $\l\in[0,1]$, and that $\Phi(\l)$ is an increasing singular function. 


\begin{theorem}\label{th4} 
We have
$$
x \ll N^+_{\varphi}(x) \leq \left(\frac12+\int_0^1\frac{\Phi(t)dt}{(1+t)^2}+o(1)\right)x.
$$	
\end{theorem}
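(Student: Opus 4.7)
The lower bound $x \ll N^+_{\varphi}(x)$ follows from the same method used to establish Theorems \ref{th1} and \ref{th2}: one exhibits a positive-density family of $k$ for which the values $k+\varphi(k)$ are pairwise distinct and bounded by a constant multiple of $x$. The substance of Theorem \ref{th4} is the upper bound, which refines the naive application of Theorem \ref{th3} by exploiting the limit distribution $\Phi$.

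The starting observation is the trivial inequality
$$
N^+_{\varphi}(x)\leq \#\{k\geq 1 : k+\varphi(k)\leq x\},
$$
since every $n$ in the image is hit by at least one $k$. Writing $k+\varphi(k)=k(1+\varphi(k)/k)$, the condition $k+\varphi(k)\leq x$ reads $k\leq x/(1+\varphi(k)/k)$. I partition $[0,1]$ by a mesh $0=t_0<t_1<\dots<t_M=1$ and sandwich $\#\{k : k+\varphi(k)\leq x\}$ between
$$
\sum_{j=0}^{M-1}\#\{k\leq x/(1+t_{j+1}) : t_j\leq \varphi(k)/k<t_{j+1}\}
$$
and the analogous sum with $t_{j+1}$ replaced by $t_j$ in the constraint on $k$. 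By the definition of $\Phi$ (and its continuity at the break-points, which can be arranged since $\Phi$ is continuous by Schoenberg's theorem), each inner count equals $\frac{x}{1+t_\ast}(\Phi(t_{j+1})-\Phi(t_j))+o_M(x)$ as $x\to\infty$. Letting $x\to\infty$ with $M$ fixed, then refining the mesh, both sandwich bounds converge to $x\int_0^1 d\Phi(t)/(1+t)$.

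Finally, integration by parts, using $\Phi(0)=0$ and $\Phi(1)=1$, yields
$$
\int_0^1\frac{d\Phi(t)}{1+t}=\frac{1}{2}+\int_0^1\frac{\Phi(t)}{(1+t)^2}\,dt,
$$
which is exactly the bound asserted in Theorem \ref{th4}.

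The one delicate point is the order of limits: with the mesh fixed the error $o_M(x)$ depends on $M$, so refining the mesh simultaneously with $x\to\infty$ would require a quantitative rate for $\Phi_y\to\Phi$. This is sidestepped by first taking $x\to\infty$ at each fixed $M$ (which produces an honest Riemann sum for the Stieltjes integral) and only then letting $M\to\infty$; continuity of $\Phi$ guarantees that these Riemann sums converge to $\int_0^1 d\Phi(t)/(1+t)$, so no effective rate is needed.
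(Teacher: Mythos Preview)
Your treatment of the upper bound is correct and follows essentially the same route as the paper: both start from $N^+_{\varphi}(x)\leq\#\{k:k+\varphi(k)\leq x\}$, partition according to the value of $\varphi(k)/k$, invoke the limit distribution $\Phi$, and arrive at $\int_0^1 d\Phi(t)/(1+t)=\tfrac12+\int_0^1 \Phi(t)(1+t)^{-2}\,dt$. The only technical difference is that the paper lets the mesh shrink with $x$ (spacing $\asymp 1/\log\log x$) and uses a uniform quantitative rate $\Phi_x(\lambda)=\Phi(\lambda)+O((\log x)^{-1+o(1)})$ to control the accumulated error, whereas you keep the mesh fixed, send $x\to\infty$, and only then refine. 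Your two-step limit is more elementary since it needs only pointwise convergence of $\Phi_x$ to $\Phi$ and continuity of $\Phi$; the paper's version has the advantage of giving a single explicit $o(1)$ error term.

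Your one-sentence dismissal of the lower bound, however, understates the difficulty to the point of being a gap. The phrase ``the same method used to establish Theorems \ref{th1} and \ref{th2}'' is right only at the level of strategy (energy bound plus Cauchy--Schwarz); the execution is substantially harder because $\varphi(k)$ is comparable to $k$, not $o(k)$. In the $\omega$ and $\tau$ cases the right-hand side $N=\omega(l_2)-\omega(l_1)$ or $2^{\omega(l_2)+1}-2^{\omega(l_1)+1}$ is tiny, so the sieve input is essentially uniform. For $\varphi$ the equation becomes $(m+\varphi(m))p-(m'+\varphi(m'))p'=\varphi(m)-\varphi(m')$ with all three quantities of size $\asymp x^{1/2}$, and one must control the gcd of $m+\varphi(m)$ and $m'+\varphi(m')$, the $y$-smooth part of $m+\varphi(m)$, the quantity $\sum_{\pi\mid m+\varphi(m)}1/\pi$, and the prime factors of $\varphi(m)-\varphi(m')$ in the sieve constant. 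The paper does this via four preparatory lemmas (on smooth parts, on $p^{2a}\mid\varphi(m)$, on $\sum_{p\mid\varphi(m),\,p>\log_2 x}1/p$, and on $\sum_{\pi\mid m+\varphi(m)}1/\pi$) and a three-case analysis according to the size of $h=\gcd(m+\varphi(m),m'+\varphi(m'))/d$. None of this is present in or suggested by your plan, and without it the energy bound does not go through. Also, a minor correction: the method does not produce a set on which $k\mapsto k+\varphi(k)$ is literally injective; it only shows the energy is $O(x)$, which is what Cauchy--Schwarz needs.
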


Our numerical calculations (based on the values of $\Phi_x(\l)$ for $x=10^5$ at the points $a/10$, where $a=0,...,9$) predict that $\int_0^1\frac{\Phi(t)dt}{(1+t)^2}<0.17$, which would imply that $N^+_{\varphi}(x)< 0.67x$.

\medskip 

Now we briefly discuss the proofs of the lower bounds for $N^+_f(x)$ in the cases $f\in\{\o,\tau, \varphi\}$. To obtain such a result, it is natural to estimate the ``additive energy''
$$
\#\{k,l\leq x: k+f(k)=l+f(l)\}.
$$
Once we show that this quantity is $O(x)$, we immediately get $N^+_f(x)\gg x$ by a standard argument which uses the Cauchy-Schwartz inequality (see the end of Section \ref{sec2} for details). However, it is technically easier to work with the image of an appropriate dense subset $A\subset\{n\leq x\}$ instead, and check that
$$
\#\{k_1,k_2\in A: k_1+f(k_1)=k_2+f(k_2)\} \ll x,
$$
which will imply in a similar manner that the image of $A$ under the map $k+f(k)$ has size $\gg x$, and, hence, that $N^+_f(x)\gg x$. We note that our lower bounds of the type $\gg x$ are explicit; however, the constants there are very small (of order $10^{-5}$ or even worse), so we did not try to write them down and  optimize. 

One of the remaining open questions in our setting is to establish that there are $\gg x$ numbers not exceeding $x$ which are not representable in the form $k+\o(k)$. Note that in the work \cite{ESP1}, the authors conjectured in the final remark that the number of $n\leq x$ which have at least two representations is $\gg x$; clearly, it implies the bound we believe in.  

In Sections \ref{sec2} and \ref{sec3} we prove Theorems \ref{th1} and \ref{th2}, respectively. Section \ref{sec4} is devoted to Theorem \ref{th3}, and in the final Section \ref{sec5} we establish Theorem \ref{th4}.

\section{Proof of Theorem \ref{th1}} \label{sec2} 

Let $y=x^{1/3}$ and $K>0$ be a fixed large enough number. For brevity we will use the notation $\log_2x:=\log\log x$, $\log_3x:=\log\log\log x$, etc. Let us consider the set $A$ of $n\leq x$ of the form $n=lp$, where 
\begin{itemize}
	\item[(i)] $l\leq y$;
	
	\item[(ii)] $l$ is square-free;
	
	\item[(iii)] $\o(l) \in I=[\log_2 x - K\log_2^{1/2}x,\log_2x+K\log_2^{1/2}x]$; 
	
	\item[(iv)] $y<p\leq x/l$ and $p$ is a prime.
\end{itemize}

\smallskip

We have
$$
|A|= \sum_{\substack{l\leq y \\ l \mbox{ \tiny{obeys (ii) and (iii)} } }} \left(\pi(x/l) - \pi(y)\right) \gg \sum_{\substack{l\leq y \\ l \mbox{ \tiny{obeys (ii) and (iii)} } }} \pi(x/l) \gg \frac{x}{\log x}   \sum_{\substack{l\leq y \\ l \mbox{ \tiny{obeys (ii) and (iii)} } }} \frac1l \, .
$$
Since for any $M$ with $y^{1/2}<M\leq y/2$ there are $\gg M$ numbers $l\in [M,2M)$ obeying (ii) and (iii) (here we use that $K$ is large), we get
\begin{equation}\label{2.1}
|A|\gg x.
\end{equation}

Our goal is to bound the quantity 
$$
E:=\#\{(n,m)\in A^2: n+\o(n)=m+\o(m) \}.
$$
Note that
\begin{equation}\label{2.2}
E=|A|+R,	
\end{equation}
where $R$ corresponds to the solutions with $n\neq m$. We have
$$
R\leq \sum_{\substack{l_1, l_2\leq y \\ l_1\neq l_2}}\#\{(p_1,p_2): l_1p_1+\o(l_1) = l_2p_2 +\o(l_2) \} = \sum_{\substack{l_1, l_2\leq y \\ l_1\neq l_2}} I_{l_1,l_2}, 
$$
where $I_{l_1,l_2}$ is the number of solutions (in primes $p_1\leq x/l_1$ and $p_2\leq x/l_2$) of the equation
\begin{equation}\label{2.3} 
l_1p_1-l_2p_2=N,	
\end{equation}
where $N=N(l_1,l_2)=\o(l_2)-\o(l_1)\neq0$. Fix any $l_1<l_2\leq y$ obeying (ii) and (iii), and let $\d:=\gcd(l_1,l_2)$. Now note that the equation (\ref{2.3}) has no solutions if $N$ is not divisible by $\d$, and otherwise is equivalent to   
\begin{equation}\label{2.4} 
	(l_1/\d)p_1-(l_2/\d)p_2=N/\d. 
\end{equation}
In the latter case, we have $\gcd(N\d^{-1},\,l_1l_2\d^{-2})=1$ (otherwise it again has no solutions). Also $|N|\leq z:=2K\log_2^{1/2}x$ by the definition of the set $A$, and thus we may consider only those $l_1,l_2$ for which $\d\leq z$. Let $\s(n)$ stand for the sum of divisors of a positive integer $n$. For $a,b\leq x^{1/3}$ with $\gcd(a,b)=\gcd(ab,N)=1$, Selberg's sieve implies that the equation $ap_1-bp_2=N$ has at most
$$
\ll \frac{x}{ab(\log x)^2} \prod_{p|Nab, p>2}\frac{1-1/p}{1-2/p} \ll  \frac{x}{\varphi(a)\varphi(b)(\log x)^2}\frac{\s(|N|)}{|N|}
$$
solutions in primes $p_1\leq x/a$, $p_2\leq x/b$, with the implied constant being absolute. Now we apply this for the situation of (\ref{2.4}) and first note that the condition $p_s\leq x/l_s$ is equivalent to $p_s\leq (x/\d)/(l_s/\d)$ for $s=1,2$, so we can take $x/\d$ instead of $x$ in that bound. 
We get
$$
I_{l_1,l_2} \ll \frac{x/\d}{(\log x)^2}\frac{\s(|N|/\d)}{\varphi(l_1/\d)\varphi(l_2/\d)(|N|/\d)}.
$$
Since $\s(a)/a \leq \s(b)/b$ whenever $a\mid b$, we get
$$
R\ll \frac{x}{(\log x)^2}\sum_{\d\leq z}\frac{1}{\delta}\sum_{\substack{l_1,l_2\leq y, \, \gcd(l_1,l_2)=\d \\ \d|\o(l_2)-\o(l_1)\neq0}} \frac{\s(|N(l_1,l_2))|)}{\varphi(l_1/\d)\varphi(l_2/\d)|N(l_1,l_2)|},
$$
where the inner summation is over $l_1$ and $l_2$ satisfying also (ii) and (iii). Let $l_s':=l_s/\d$ for $s=1,2$. Since $l_1$ and $l_2$ are square-free, we have 
$$
\o(l_1')-\o(l_2')=\o(l_1/\d)-\o(l_2/\d)=\o(l_1)-\o(l_2).
$$ 
Let $i:=\o(l_1')$ and $j:=\o(l_2')$. Since $\d \ll (\log_2 x)^{1/2}$, we trivially have $\o(\d)\ll \log_3 x$. Then, for large enough $x$,
$$
i,j\in I':=[\log_2 x-(K+1)(\log_2 x)^{1/2}, \, \log_2 x+(K+1)(\log_2 x)^{1/2}]
$$
and
$$
R\ll \frac{x}{(\log x)^2}\sum_{\d\leq z}\frac{1}{\delta}\sum_{\substack{i,j\in I' \\ i\neq j,\, \d|i-j}} \frac{\s(|i-j|)}{|i-j|}\sum_{\substack{l_1', l_2'\leq y/\d \\ \gcd(l_1',l_2')=1 \\ \o(l_1')=i, \, \o(l_2')=j}} \frac{1}{\varphi(l_1')\varphi(l_2')}.
$$
Further, note that 
\begin{equation*}
\sum_{\substack{l\leq y/\d \\ \o(l)=j}}\frac{1}{\varphi(l)}\leq \frac{1}{j!}\left(\sum_{p\leq y/\d}\frac{1}{\varphi(p)} \right)^j \leq \frac{u^j}{j!},
\end{equation*}
where $u=\log_2x+B$ with some absolute constant $B>0$. Forgetting about the condition $\gcd(l_1',l_2')=1$, we thus get 
\begin{equation}\label{2.5} 
R \ll \frac{x}{(\log x)^2}\sum_{\d\leq z}\frac{1}{\delta}\sum_{\substack{i,j\in I' \\ i>j,\, \d|i-j}} \frac{\s(i-j)}{i-j}\frac{u^{i+j}}{i!j!} = \frac{x}{(\log x)^2}R_1,
\end{equation}
where
\begin{equation}\label{2.6}
R_1 = \sum_{\substack{i,j\in I' \\ i > j}}\frac{\s(i-j)}{i-j} \frac{u^{i+j}}{i!j!} \sum_{\d| i-j} \frac{1}{\d}=\sum_{\substack{i,j\in I' \\ i>j}} \frac{u^{i+j}}{i!j!} \left(\frac{\s(i-j)}{i-j}\right)^2.
\end{equation}
Note that for any $i\in I'$
$$
\frac{u^i}{i!}\left(\frac{u^{i+1}}{(i+1)!}\right)^{-1}=\frac{i+1}{u}= 1+O((\log_2 x)^{-1/2}).
$$
Thus, we obtain
$$
\frac{u^i}{i!}\left(\frac{u^{j}}{j!}\right)^{-1} = \left(1+O((\log_2x)^{-1/2})\right)^{i-j} \asymp 1
$$
uniformly on $i,j\in I'$. Since
$$
\sum_{i\in I'}\frac{u^i}{i!} \leq \sum_{i=0}^{\infty}\frac{u^i}{i!}=e^u\ll \log x,
$$
and $|I'|\asymp (\log_2x)^{1/2}$, we see that
$$
\frac{u^i}{i!} \ll \frac{\log x}{(\log_2x)^{1/2}}
$$
uniformly on $i\in I'$. Therefore (\ref{2.5}) and (\ref{2.6}) imply
$$
R\ll \frac{x}{\log_2x}\sum_{i,j\in I': i>j}\left(\frac{\s(i-j)}{i-j}\right)^2 \ll \frac{x}{(\log_2x)^{1/2}}\sum_{r\leq 2(K+1)(\log_2x)^{1/2}}\left(\frac{\s(r)}{r}\right)^2 \ll x
$$
by the well-known fact that $\s(r)/r$ is $O(1)$ on the average. Thus, by (\ref{2.2})
$$
E \ll x
$$ 
as well. Now denote $M:=\max\{\o(k): k\leq x\}$. Now let 
$$
B=\{n\leq x+M: n=k+\o(k) \mbox { for some } k\in A \},
$$ 
and $r(n)=\#\{k\in A: k+\o(k)=n \}$. Then by (\ref{2.1}) and the Cauchy-Schwarz inequality 
$$
x \ll |A|=\sum_{n\in B}r(n) \leq |B|^{1/2}E^{1/2}, 
$$
and so $|B| \gg x$. Finally, since $M=x^{o(1)}$, we see that the number of $n\leq x$ representable in the form $k+\o(k)$ is at least $\gg x$. This completes the proof.

\section{Proof of Theorem \ref{th2}}  \label{sec3}

\subsection{Proof of the lower bound} We argue similarly to the proof of Theorem \ref{th1} and omit the details which do not require any changes. Let $y=x^{1/3}$ and $K>0$ be large enough. Let us consider the set $A$ of $n\leq x$ such that $n=lp$, where 
\begin{itemize}
\item[(i)] $l\leq y$;
	
\item[(ii)] $l$ is odd and square-free;
	
\item[(iii)] $\o(l) \in I=[\log_2 x - K\log_2^{1/2}x,\log_2x+K\log_2^{1/2}x]$; 
	
\item[(iv)]  $y<p\leq x/l$ and $p$ is a prime.
\end{itemize}

\smallskip

We again have
\begin{equation}\label{3.1}
|A|\gg x.
\end{equation}
Let
$$
E:=\#\{(n,m)\in A^2: n+\tau(n)=m+\tau(m) \}.
$$
Then
\begin{equation}\label{3.2}
E=|A|+R,	
\end{equation}
where $R$ corresponds to the solutions with $n\neq m$. We have
$$
R\leq \sum_{\substack{l_1, l_2\leq y \\ l_1\neq l_2}}\#\{(p_1,p_2): l_1p_1+2^{\o(l_1)+1} = l_2p_2 +2^{\o(l_2)+1} \} = \sum_{\substack{l_1, l_2\leq y \\ l_1\neq l_2}} I_{l_1,l_2}, 
$$
where $I_{l_1,l_2}$ is the number of solutions (in  primes $p_1\leq x/l_1$ and $p_2\leq x/l_2$) of the equation
\begin{equation}\label{3.3} 
l_1p_1-l_2p_2=N,	
\end{equation}
where $N=N(l_1,l_2)=2^{\o(l_2)+1}-2^{\o(l_1)+1}\neq0$. Fix any $l_1<l_2\leq y$ obeying (ii) and (iii), and let $\d:=\gcd(l_1,l_2)$. Now note that the equation (\ref{3.3}) has no solutions if $N$ is not divisible by $\d$, and otherwise is equivalent to   
\begin{equation}\label{3.4} 
(l_1/\d)p_1-(l_2/\d)p_2=N/\d. 
\end{equation}
In the latter case we have $\gcd(N\d^{-1},\,l_1l_2\d^{-2})=1$ (otherwise it again has no solutions). Also $|N|\leq z:=\log x$ (say) by the definition of the set $A$, and thus we may consider only those $l_1,l_2$ for which $\d\leq z$. Now Selberg's sieve implies that 
$$
I_{l_1,l_2} \ll \frac{x/\d}{(\log x)^2}\frac{\s(|N|)}{\varphi(l_1/\d)\varphi(l_2/\d)|N|},
$$
and therefore
$$
R\ll \frac{x}{(\log x)^2}\sum_{\d\leq z}\frac{1}{\delta}\sum_{\substack{l_1,l_2\leq y, \, \gcd(l_1,l_2)=\d \\ \d|2^{\o(l_2)+1}-2^{\o(l_1)+1}\neq0}} \frac{\s(|N(l_1,l_2))|)}{\varphi(l_1/\d)\varphi(l_2/\d)|N(l_1,l_2)|};
$$
the inner summation here is over $l_1,l_2$ satisfying also (ii) and (iii). Note that $\d=\gcd(l_1,l_2)$ is odd, and then $\d\, |\, 2^{\o(l_2)+1}-2^{\o(l_1)+1}$ is equivalent to $\d\, |\, 2^{|\o(l_2)-\o(l_1)|}-1$. Let $l_s':=l_s/\d$ for $s=1,2$. 
Since $l_1$ and $l_2$ are square-free, we have 
$$
\o(l_1')-\o(l_2')=\o(l_1/\d)-\o(l_2/\d)=\o(l_1)-\o(l_2).
$$ 
Let $i:=\o(l_1')$ and $j:=\o(l_2')$. Since $\d$ divides $2^{|\o(l_2)-\o(l_1)|}-1$, which is $\exp(O(\log_2x)^{1/2})$ by the condition (iii), and $\o(n) \ll \log n/\log_2 n$, we have
$$
\o(\d)\ll \frac{(\log_2 x)^{1/2}}{\log_3x}, 
$$
and so, for $x$ large, 
$$
i,j\in I':=[\log_2 x-(K+1)(\log_2 x)^{1/2}, \, \log_2 x+(K+1)(\log_2 x)^{1/2}].
$$
Since $\s(ab)\leq\s(a)\s(b)$,
$$
R\ll \frac{x}{(\log x)^2}\sum_{\d\leq z}\frac{1}{\delta}\sum_{\substack{i,j\in I' \\ i> j,\, \d|2^{i-j}-1}} \frac{\s(2^{i-j}-1)}{2^{i-j}-1}\sum_{\substack{l_1', l_2'\leq y/\d \\ \gcd(l_1',l_2')=1 \\ \o(l_1')=i, \, \o(l_2')=j}} \frac{1}{\varphi(l_1')\varphi(l_2')}.
$$
Setting $u=\log_2x+B$ (here $B>0$ is an absolute constant), we get 
\begin{equation}\label{3.5} 
R \ll \frac{x}{(\log x)^2}\sum_{\d\leq z}\frac{1}{\delta}\sum_{\substack{i,j\in I' \\ i>j,\, \d|2^{i-j}-1}} \frac{\s(2^{i-j}-1)}{2^{i-j}-1}\frac{u^{i+j}}{i!j!}.
\end{equation}
As before,
$$
\frac{u^i}{i!} \ll \frac{\log x}{(\log_2x)^{1/2}}
$$
uniformly on $i\in I'$. Therefore, (\ref{3.5}) implies
$$
R\ll \frac{x}{\log_2x}\sum_{i,j\in I': i>j}\left(\frac{\s(2^{i-j}-1)}{2^{i-j}-1}\right)^2 \ll \frac{x}{(\log_2x)^{1/2}}\sum_{r\leq 2(K+1)(\log_2x)^{1/2}}\left(\frac{\s(2^r-1)}{2^r-1}\right)^2.
$$
Now we need the fact that
$$
\sum_{r\leq y}\left(\frac{\s(2^r-1)}{2^r-1}\right)^2 \ll y,
$$
which was proved in \cite{LS}. Actually the main result of \cite{LS} is a much more general statement: it states that the average value of the moments of $(\sigma(n)/n)^k$ (with a positive integer $k$), when $n$ ranges over members of a linearly recurrent sequence of integers like $2^r-1$, is bounded. 

Thus, $R\ll x$, and, by (\ref{3.2})
$$
E \ll x
$$ 
as well. Since $\max\{\tau(k): k\leq x\}=x^{o(1)}$, we see from (\ref{3.1}) and the Cauchy-Schwarz inequality that the number of $n\leq x$ representable in the form $k+\tau(k)$ is at least $\gg x$. The claim follows.

\subsection{Proof of the upper bound} We claim that it is enough to show that
\begin{equation}\label{3.6}
T(x):=  \#\{k\leq x: k+\tau(k)\equiv\, 0\,(\text{mod}\, 3)\} = (c+o(1))x, 
\end{equation}
where $c = \frac13+\frac{\zeta(3)}{12\zeta(2)} = 0.394\ldots$ Indeed, since $\max_{k\leq x}\tau(k)=x^{o(1)}$, the same asymptotics holds if we additionally require $k+\tau(k)\leq x$. Then the images (under the map $k\mapsto k+\tau(k)$) of the remaining $(1-c+o(1))x$ numbers $k\leq x$ cannot cover all $2x/3+O(1)$ numbers $n\leq x$ which are equal $\pm1\pmod3$. Hence, at least $(\frac{\zeta(3)}{12\zeta(2)}-o(1))x\geq 0.06x$ of these $n$ have no representation of the form $k+\tau(k)$, as desired. 

So it remains to prove (\ref{3.6}). We write $T(x)$ as
$$
T(x):= T_0(x)+T_1(x)+T_2(x),
$$
where 
$$
T_i(x): = \#\{k\leq x: k\equiv i \, (\text{mod}\, 3) \mbox{\, and \,} \tau(k)\equiv -i \,(\text{mod}\, 3) \}, \qquad i\in\{0,1,2\}.
$$
First, we derive the asymptotic formula for $T_0(x).$ We have
\begin{equation*}
T_0(x) = \sum_{\substack{n\leq x \\ n\equiv\,0\,(\text{mod}\, 3)}}1 - \sum_{\substack{n\leq x \\ n\equiv\,0\,(\text{mod}\, 3) \\ \tau(n)\not{\equiv}\,0\,(\text{mod}\, 3)}}1\\
=\frac{x}{3}-\sum_{\substack{\alpha\geq 1\\ \alpha\not{\equiv}\,2\,(\text{mod}\, 3)}}K\left(\frac{x}{3^\alpha}\right)  + O(1),
\end{equation*}
where 
$$
K(y) := \#\{k\leq y: \gcd(k,3) = 1, \, \tau(k)\not{\equiv}\,0\,(\text{mod}\, 3)\}.
$$
To calculate $K(y)$, we consider the corresponding Dirichlet series
$$
\mathcal{K}(s) := \sum_{\substack{k = 1 \\ \gcd(k,3) = 1 \\ \tau(k)\not{\equiv}\,0\,(\text{mod}\, 3)}}^{+\infty}\frac{1}{k^s}, \quad s\in {\mathbb C},~{\text{\rm Re}}(s)>1.
$$
We have
\begin{multline*}
\mathcal{K}(s) = \prod_{p\,\nmid\, 3}\left( 1+ \frac{1}{p^s}+\frac{1}{p^{3s}}+\frac{1}{p^{4s}}+\frac{1}{p^{6s}}+\cdots\right) \\
= \zeta(s)\prod_p\left\{\left( 1-\frac{1}{p^s}\right)\left( 1+ \left(\frac{1}{p^s}+\frac{1}{p^{3s}} \right)\frac{1}{1-p^{-3s}} \right)   \right\}\\ 
\times\left(1+\left(\frac{1}{3^s}+\frac{1}{3^{3s}}\right)\frac{1}{1-3^{-3s}} \right)^{-1}
=\frac{\zeta(s)\zeta(3s)}{\zeta(2s)}\frac{1-27^{-s}}{1+3^{-s}}.
\end{multline*}
Applying Lemma 3.1 from \cite{Changa} to $\mathcal{K}(s)$, we obtain
$$
K(y) = \frac{13\zeta(3)}{18\zeta(2)}x+O\left(\frac{x}{\log x}\right).
$$
Hence,
\begin{equation}\label{3.7}
T_0(x)=\frac{x}{3}-\frac{13\zeta(3)}{18\zeta(2)}\,x \sum_{\substack{\alpha = 1 \\ \alpha\not{\equiv}\,2\,(\text{mod}\, 3)}}^{\infty}\frac{1}{3^{\alpha}} + o(x) = \left(\frac13-\frac{5\zeta(3)}{18\zeta(2)}+o(1)\right)x.
\end{equation}
Now we obtain the asymptotics for $T_1(x)+T_2(x)$. Note that for any with $\tau(k) \not{\equiv}\, 0\,(\text{mod}\,3)$, 
\begin{equation*}
\tau(k) = \prod_{p^\a || k}(\a+1) \equiv \prod_{\substack{p^{\a}||k \\ \a \equiv 1 (\text{mod}\, 3)}}2 \equiv 2^{W(k)} (\text{mod}\, 3),
\end{equation*}
where 
$$
W(k) := \#\{p|k: p^{\a}||k \mbox{ \, for some } \a\equiv 1 \, (\text{mod}\, 3)\}.
$$
Hence, for such $k$, the congruence $\tau(k) \equiv\, 2\,(\text{mod}\,3)$ holds if and only if $W(k) \equiv\, 1\,(\text{mod}\,2)$. Let $\chi_0$ be the principal character $\text{mod}\, 3$ and $\chi_3$  be the only nontrivial character $\text{mod}\, 3$. Then 
\begin{multline*}
T_1(x) = \#\{k \leq x: k \equiv 1 \, (\text{mod}\, 3), \, \tau(k) \not{\equiv} 0\, (\text{mod}\, 3), \, W(k) \equiv\, 1\,(\text{mod}\,2)\} \\
=\frac14\sum_{\substack{k\leq x \\ \tau(k) \not{\equiv} 0 (\text{mod}\, 3)}}\left( \chi_0(k)+\chi_3(k)\right)\left( 1-(-1)^{W(k)}\right)\\
= \frac14\left(T_1^{(1)}(x)-T_1^{(2)}(x)+T_1^{(3)}(x)-T_1^{(4)}(x)\right), 
\end{multline*}
where
$$
T_1^{(1)}(x) := \sum_{\substack{k \leq x \\ \gcd(k,3) = 1 \\ \tau(k) \not{\equiv} 0 (\text{mod}\, 3)}} 1,\ \ \ \ \ \ \ \ \ \ \ T_1^{(2)}(x) := \sum_{\substack{k \leq x \\ \gcd(k,3) = 1 \\ \tau(k) \not{\equiv} 0 (\text{mod}\, 3)}} (-1)^{W(k)},
$$
and
$$
T_1^{(3)}(x) := \sum_{\substack{k \leq x \\ \gcd(k,3) = 1 \\ \tau(k) \not{\equiv} 0 (\text{mod}\, 3)}} \chi_3(k),\ \ \ \ \ \ \ \ \ \  T_1^{(4)}(x) := \sum_{\substack{k \leq x \\ \gcd(k,3) = 1 \\ \tau(k) \not{\equiv} 0 (\text{mod}\, 3)}} \chi_3(k)(-1)^{W(k)}.$$
Arguing similarly, we obtain
$$
T_2(x) = \frac14\left(T_2^{(1)}(x)+T_2^{(2)}(x)-T_2^{(3)}(x)-T_2^{(4)}(x)\right).
$$
Hence,
\begin{equation}\label{3.8}
T_1(x)+T_2(x) = \frac12\left(T_1^{(1)}(x)-T_1^{(4)}(x) \right).
\end{equation}
Since the sum $T_1^{(1)}(x)$ coincides with $K(x)$, it follows that
\begin{equation}\label{3.9}
T_1^{(1)}(x) = \left(\frac{13\zeta(3)}{18\zeta(2)}+o(1)\right)x.
\end{equation}
It remains to estimate $T_1^{(4)}(x)$. As before, we consider the corresponding Dirichlet series 
$$
\mathcal{T}(s) := \sum_{\substack{k=1 \\ \tau(k) \not{\equiv} 0 (\text{mod}\, 3)\\ }}^{\infty}\frac{\chi_3(k)(-1)^{W(k)}}{k^s}, \quad s\in {\mathbb C},~~{\text{\rm Re}}(s)>1.
$$
We have
\begin{multline*}
\mathcal{T}(s) = \prod_{p\equiv 1(\text{mod}\,3)}\left(1+\sum_{\substack{\nu = 1 \\ \nu\not{\equiv}2(\text{mod}\,3)}}^{\infty}\frac{(-1)^{W(p^\nu)}}{p^{\nu s}} \right) \prod_{p\equiv 2(\text{mod}\,3)}\left(1+\sum_{\substack{\nu = 1 \\ \nu \not{\equiv}2(\text{mod}\,3)}}^{\infty}\frac{(-1)^{W(p^\nu)}(-1)^\nu}{p^{\nu s}} \right)\\
= \prod_{p\equiv 1(\text{mod}\,3)}\left(1+\left( -\frac{1}{p^s}+\frac{1}{p^{3s}}\right)\frac{1}{1-p^{-3s}} \right) \prod_{p\equiv 2(\text{mod}\,3)}\left(1+\left( \frac{1}{p^s}-\frac{1}{p^{3s}}\right)\frac{1}{1+p^{-3s}} \right)\\
=L^{-1}(s,\chi_3) \prod_{p\equiv 1(\text{mod}\,3)}\left(1+\frac{1}{p^{3s}-1} \right)\prod_{p\equiv 2(\text{mod}\,3)}\left(1-\frac{1}{p^{3s}+1} \right)=L^{-1}(s,\chi_3)L(3s,\chi_3).  
\end{multline*}
By \cite{Kuch}, Lemma 10, we obtain
\begin{equation}\label{3.10}
T_1^{(4)}(x) \ll x \exp(-c\,\sqrt{\log x})
\end{equation}
for some absolute constant $c>0$. Combining (\ref{3.7})--(\ref{3.10}), we complete the proof of (\ref{3.6}).

\section{Proof of Theorem \ref{th3}}  \label{sec4}

We use some ideas from \cite{Z}. For an integer $s\geq0$, we set 
$$
A_s(x):=\{n\leq x: \mbox{the equation } n=k+f(k) \mbox{ has exactly $s$ solutions} \}.
$$
Note that for any positive integer $x$ we have (since $n\leq x$ and $n=k+f(k)$ imply $k\leq x$)
$$
\bigsqcup_{s\geq0}A_s(x)=\{1,...,x\}=:[x] \supseteq \bigsqcup_{s\geq1}\bigsqcup_{n\in A_s(x)}\{k: n=k+f(k)\},
$$	
and, hence,
$$
\sum_{s\geq0}|A_s(x)| \geq \sum_{s\geq 1}s|A_s(x)|,
$$
or
\begin{equation}\label{4.1} 
 \sum_{s\geq2}(s-1)|A_s(x)|\leq |A_0(x)|.
\end{equation}	

 Now for each $n\in A_1(x)$ we fix the unique $k$ such that $k+f(k)=n$. Let us denote the set of these $k$ by $B_1(x)$. We have 
$$
\sum_{n\in A_1(x)}n = \sum_{k\in B_1(x)}(k+f(k)) = \sum_{k\leq x}(k+f(k)) - \sum_{\substack{ k \leq x \\ k\notin B_1(x)} } (k+f(k)).  
$$
Hence, 
$$
\sum_{k\leq x}f(k) = \sum_{\substack{ k \leq x \\ k\notin B_1(x)} }(k+f(k)) - \sum_{\substack{ n\leq x\\ n\notin A_1(x)}}n \leq (c+1)x(x - |B_1(x)|).
$$
But (\ref{4.1}) implies $x-|B_1(x)|=x-|A_1(x)|=|A_0(x)|+\sum_{s\geq2}|A_s(x)|\leq 2|A_0(x)|$. The claim follows.

\section{Proof of Theorem \ref{th4}} \label{sec5} 

\subsection{Proof of the lower bound}  

We will follow the method from \cite{LP}, where it was established that the set of numbers representable in the form $\sigma(k)-k$ has positive lower density, and bring up some modifications and simplifications.

We first provide some auxiliary statements. Here and in what follows $P^+(m)$ stands for the largest prime divisor of $m$. Letters $p,q,r,\pi$ will be used for primes. For a positive integer $m$ and a number $y\geq 2$, we also define the $y$-smooth part of $m$ by
$$
D(m,y):=\max\{t|m: P^+(t)\leq y\}.
$$
Let $x$ be large enough and, for the rest of the proof,
$$
y:=\frac{\log_2x}{\log_3x} \, .
$$

\begin{lem}\label{lem5.1} 
For all but $o(x)$ numbers $m\leq x$ we have
$$	
D(m,y)\leq \log x .
$$	
\end{lem}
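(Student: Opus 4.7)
The plan is to estimate the first moment $\sum_{m\leq x}\log D(m,y)$ and invoke Markov's inequality. Since the condition $D(m,y)>\log x$ is the same as $\log D(m,y)>\log_2 x$, bounding the number of exceptional $m$ reduces to controlling an average of an additive-style function of $m$.

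To compute the first moment I would write $\log D(m,y)=\sum_{p\leq y}v_p(m)\log p$, where $v_p(m)$ is the $p$-adic valuation of $m$, and swap the order of summation:
$$
\sum_{m\leq x}\log D(m,y)=\sum_{p\leq y}\log p\sum_{a\geq 1}\left\lfloor\frac{x}{p^a}\right\rfloor \leq x\sum_{p\leq y}\frac{\log p}{p-1}=x\bigl(\log y+O(1)\bigr)
$$
by Mertens' theorem (together with the negligible contribution from prime powers $p^a$ with $a\geq 2$). Markov's inequality then yields
$$
\#\{m\leq x:\,D(m,y)>\log x\}\leq\frac{1}{\log_2 x}\sum_{m\leq x}\log D(m,y)\ll\frac{x\log y}{\log_2 x}.
$$

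With the chosen value $y=\log_2 x/\log_3 x$ one has $\log y=\log_3 x+O(1)$, so the right-hand side is $\ll x\log_3 x/\log_2 x=o(x)$, which is exactly the lemma. The only step that requires any care is the Mertens estimate for $\sum_{p\leq y}\log p/(p-1)$; the rest is bookkeeping. A more laborious alternative would be a union bound $\#\{m\leq x:D(m,y)>\log x\}\leq x\sum_{t>\log x,\,P^+(t)\leq y}1/t$ handled via Rankin's trick at $\alpha=1/(2\log y)$, producing the same decay $\exp(-\log_2 x/(2\log_3 x))$ up to a factor $(\log_3 x)^{O(1)}$, but this is strictly more work than the Markov route above.
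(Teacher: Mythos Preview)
Your argument is correct and entirely self-contained, whereas the paper simply cites Theorem~07 of Hall--Tenenbaum \cite{HT} as a black box. The first-moment computation
\[
\sum_{m\leq x}\log D(m,y)=\sum_{p\leq y}\log p\sum_{a\geq1}\left\lfloor\frac{x}{p^a}\right\rfloor\leq x\sum_{p\leq y}\frac{\log p}{p-1}=x(\log y+O(1))
\]
followed by Markov is a clean elementary route that avoids any appeal to smooth-number machinery; in effect you are reproving, in the special regime needed here, the content of the cited theorem. One small slip: you write $\log y=\log_3 x+O(1)$, but in fact $\log y=\log_3 x-\log_4 x$, and $\log_4 x$ is unbounded. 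This is harmless, since the argument only requires the upper bound $\log y\leq\log_3 x$, which gives the stated $\ll x\log_3 x/\log_2 x=o(x)$. Your Rankin-trick alternative would also work and recovers a sharper saving, closer to what the Hall--Tenenbaum result actually gives, but as you note it is unnecessary for the lemma as stated.
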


\begin{proof}
This immediately follows from Theorem 07 of \cite{HT}.
\end{proof}

The next assertion is very similar to Lemma 2.1 of \cite{LP} and is related to the fact that the values of Euler's function is very ``composite''. 

\begin{lem}\label{lem5.2} 
All but $o(x)$ numbers $m\leq x$ obey
\begin{enumerate}
\item $p^{2a}|\varphi(m)$ for every prime power $p^a\leq y$;
\item $P^+(\gcd(m,\varphi(m)))\leq y$;
\item for each prime $p\leq y$, $m$ is not divisible by the prime powers $p^a$ greater than $y$; 
\item $D(m+\varphi(m),y)=D(m,y)$.
\end{enumerate}
\end{lem}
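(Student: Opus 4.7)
I would prove the four conditions separately, observing that (4) reduces to (1) and (3).

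Condition (3) follows by direct counting: the number of $m\leq x$ divisible by some prime power $p^a>y$ with $p\leq y$ is at most $\sum_{p\leq y}\sum_{a:\,p^a>y}x/p^a\ll x\pi(y)/y\ll x/\log y=o(x)$, since $y\to\infty$.

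For (1), the key observation is that each prime divisor $q$ of $m$ with $q\equiv 1\pmod{p^a}$ contributes a factor of $p^a$ to $\varphi(m)$ via $q-1$, so two such distinct primes force $p^{2a}\mid\varphi(m)$. For a fixed $p^a\leq y$, a standard sieve (Selberg or Brun) combined with Mertens' theorem in arithmetic progressions bounds the set of $m\leq x$ with fewer than two such prime divisors by $\ll x(1+\log_2 x/\varphi(p^a))(\log x)^{-1/\varphi(p^a)}$. Since $\varphi(p^a)\leq y$, one has $(\log x)^{1/\varphi(p^a)}\geq \log_2 x$ and $\log_2 x/\varphi(p^a)\leq \log_3 x$, so this bound is $\ll x\log_3 x/\log_2 x$; summing over the $\ll y/\log y$ prime powers $p^a\leq y$ yields an exceptional set of size $\ll x/\log_3 x=o(x)$.

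Condition (2) is the main technical obstacle. I would split the bad event ``some prime $q>y$ divides $\gcd(m,\varphi(m))$'' into the subcase $q^2\mid m$ (trivially contributing $\ll x/(y\log y)=o(x)$) and the subcase where $q\mid m$ together with some prime $p\equiv 1\pmod q$ dividing $m$; the latter is bounded by
\[
\sum_{q>y,\,q\,\mathrm{prime}}\frac{x}{q}\sum_{\substack{p\leq x/q\\ p\equiv 1\pmod q,\,p\,\mathrm{prime}}}\frac{1}{p}\ll x\log_2 x\sum_{q>y}\frac{1}{q^2}\asymp \frac{x\log_2 x}{y\log y},
\]
which is only $O(x)$ at the threshold $y=\log_2 x/\log_3 x$. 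To extract $o(x)$, one has to proceed as in \cite{LP}, sharpening the estimate by exploiting the sparsity of integers carrying two such ``matched'' large prime factors (so that, e.g., the subset of $m$ contributing to several distinct pairs $(p,q)$ can be discarded).

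Finally, (4) follows from (1) and (3) by a $p$-adic valuation argument: for every prime $p\leq y$, condition (3) ensures $p^{\max(v_p(m),1)}\leq y$, so (1) gives $v_p(\varphi(m))\geq 2\max(v_p(m),1)>v_p(m)$. Hence $v_p(m+\varphi(m))=v_p(m)$ for every $p\leq y$, and therefore $D(m+\varphi(m),y)=D(m,y)$.
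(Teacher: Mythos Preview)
Your arguments for (1), (3), and (4) are correct and essentially match the paper's proof: the paper uses the same sieve idea for (1) (splitting into $t\leq y/2$ and $t>y/2$ to get the sharper bound $\ll x/\log y$, but your cruder $\ll x/\log_3 x$ is perfectly adequate), the same counting for (3), and the same valuation comparison $\nu_p(\varphi(m))>\nu_p(m)$ for (4).

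The genuine gap is in (2). Your direct computation is honest---at the threshold $y=\log_2 x/\log_3 x$ one has $y\log y\sim\log_2 x$, so the bound $x\log_2 x/(y\log y)$ is only $O(x)$, not $o(x)$---but the proposed rescue is too vague to count as a proof. ``Discarding $m$ that contribute to several pairs $(p,q)$'' does not by itself help: even if every exceptional $m$ contributed to exactly one pair, the union bound you wrote down would still be $\asymp x$. One needs a genuinely different input. The paper avoids the issue entirely by invoking a known result: Theorem~8 of Erd\H{o}s--Luca--Pomerance \cite{ELP} shows that $\gcd(m,\varphi(m))=D(m,\log\log x)$ for all but $o(x)$ integers $m\leq x$. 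Combined with the elementary fact that
\[
\#\{m\leq x:\ p\mid m\ \text{for some prime}\ p\in(y,\log_2 x]\}\leq x\sum_{y<p\leq\log_2 x}\frac1p=o(x),
\]
this immediately gives $P^+(\gcd(m,\varphi(m)))\leq y$ for all but $o(x)$ integers $m\leq x$. So the right move for (2) is to cite \cite{ELP} rather than attempt an ad hoc sharpening.
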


\begin{proof} We start with the first claim. Let $t\leq y$ be a fixed power of a prime and let $\mathcal{P}_t(y_1, \sqrt x)$, where $y_1:=\log x$, denote the set of primes $p \equiv 1 \pmod d$ with $p \in (y_1, \sqrt x]$. Clearly, the set of the integers $m\leq x$ with $d^2 \nmid \varphi(m)$ is contained in the set of those $m \leq x$ which are not divisible by two different primes $p_1,p_2 \in \mathcal{P}_t(y_1,\sqrt x)$. Note that, since $t\leq \log y_1$, the prime number theorem for arithmetic progressions implies that
$$
\sum_{p\in \mathcal{P}_t(y_1,\sqrt x)}\frac1p = \frac{\log(\log x/\log y_1)}{\varphi(t)}+O(1)
$$	
uniformly for $t \leq y$. The number of $m \leq x$ which are not divisible by two
different primes in $\mathcal{P}_t(y_1, \sqrt x)$ is, by the sieve (see \cite[Theorem 2.2]{HR}),
\begin{align*} 
\ll x\left(1 +\sum_{p\in \mathcal{P}_t(y_1,\sqrt x)}\frac1p\right)
\prod_{p\in \mathcal{P}_t(y_1,\sqrt x)}\left(1-\frac1p\right) \ll
\frac{x \log \log x}{\varphi(t)}\exp\left(-\frac{\log(\log x/ \log y_1)}{\varphi(t)}\right) \leq \\
\frac{x \log \log x}{\varphi(t)}\exp\left(-\frac{\log(\log x/ \log y_1)}{t}\right) \ll 
\begin{cases}
\frac{x}{\varphi(t)}, \mbox { if } \frac12y < t \leq y, \\
\frac{x}{\varphi(t) \log \log x}, \mbox{ if } t \leq \frac12y.
\end{cases}	
\end{align*}
Letting $t$ run over primes and powers of primes, we see that the number of
integers $m \leq x$ which do not have the property in (1) is $\ll x/ \log y = o(x)$.
	
The second claim is actually \cite[Theorem 8]{ELP}, which asserts that $\gcd(m, \varphi(m))$ is equal to $D(m,\log \log x)$ for all but $o(x)$ numbers $m\leq x$. Now it suffices to note that the number of $m\leq x$ divisible by a prime in $(y, \log \log x]$ is $o(x)$.
	
Now we prove the third claim. For each prime $p\leq y$, let $a=a(p)$ be the exponent such that $p^a\leq y<p^{a+1}$. Then the number of $m\leq x$ which is divided by $p^{a+1}$ for some $p\leq y$ is at most 
$$
\sum_{p\leq y}\frac{x}{y}\ll \frac{x}{\log y} = o(x).
$$

Finally, we check the fourth claim. For a positive integer $n$, let $\nu_p(n)$ be the largest exponent $k$ such that $p^k$ divides $n$. From the first and third claims we see that for all but $o(x)$ numbers $m\leq x$ the inequality 
$$
\nu_p(\varphi(m))>\nu_p(m)
$$ 
holds for each prime $p\leq y$, and thus $\nu_p(m)=\nu_p(m+\varphi(m))$ for these $m$. The claim follows.
\end{proof}

\begin{lem}\label{lem5.3} 
All but $o(x)$ numbers $m\leq x$ obey
$$
\sum_{\substack{p|\varphi(m)\\ p> \log_2 x}}\frac1p \leq 1.
$$	
\end{lem}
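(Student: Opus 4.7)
The plan is to bound the first moment
$$
\sum_{m\leq x}S(m)\quad\text{where}\quad S(m):=\sum_{\substack{p\mid\varphi(m)\\ p>\log_{2}x}}\frac{1}{p},
$$
and show it is $o(x)$. The lemma then follows immediately from Markov's inequality, since any $m\leq x$ with $S(m)>1$ contributes at least $1$ to the sum, so there can be at most $o(x)$ such $m$.

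First I would interchange the order of summation, writing
$$
\sum_{m\leq x}S(m)=\sum_{p>\log_{2}x}\frac{1}{p}\,\#\{m\leq x:\,p\mid\varphi(m)\}.
$$
Since $\varphi(m)=\prod_{q^{a}\|m}q^{a-1}(q-1)$, a prime $p$ divides $\varphi(m)$ if and only if either $p^{2}\mid m$, or there exists a prime $q\mid m$ with $q\equiv 1\,(\text{mod}\,p)$. Consequently,
$$
\#\{m\leq x:\,p\mid\varphi(m)\}\leq\frac{x}{p^{2}}+\sum_{\substack{q\leq x\\ q\equiv 1\,(\text{mod}\,p)}}\frac{x}{q}.
$$

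The main analytic input is the Brun--Titchmarsh inequality together with partial summation, yielding
$$
\sum_{\substack{q\leq x\\ q\equiv 1\,(\text{mod}\,p)}}\frac{1}{q}\ll\frac{\log_{2}x}{p-1}
$$
uniformly for primes $p\leq\sqrt{x}$; for $p>\sqrt{x}$ one may instead count the candidates $q=1+kp\leq x$ trivially, and the resulting contribution is much smaller. Substituting in and separating the two terms gives
$$
\sum_{m\leq x}S(m)\ll x\log_{2}x\sum_{p>\log_{2}x}\frac{1}{p^{2}}+x\sum_{p>\log_{2}x}\frac{1}{p^{3}}.
$$
By partial summation from the prime number theorem, $\sum_{p>N}1/p^{2}\sim 2/(N\log N)$; taking $N=\log_{2}x$ shows that the first term is $O(x/\log_{3}x)=o(x)$, while the second is negligible.

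The only technical wrinkle is verifying the uniformity of the Brun--Titchmarsh estimate as $p$ ranges over the full interval $(\log_{2}x,x]$; for $p$ close to $x$ the classical form degrades, but the trivial bound on primes $\equiv 1\,(\text{mod}\,p)$ in a short arithmetic progression covers that range with plenty of room. Otherwise the proof is a routine first-moment calculation.
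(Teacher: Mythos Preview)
Your proposal is correct and follows essentially the same route as the paper: bound the first moment $\sum_{m\leq x}S(m)$ by swapping the order of summation, use the decomposition $p\mid\varphi(m)\Rightarrow p^{2}\mid m$ or $q\mid m$ for some prime $q\equiv 1\pmod p$, apply Brun--Titchmarsh with partial summation to get $\sum_{q\equiv 1\,(p)}1/q\ll(\log_{2}x)/p$, sum over $p>\log_{2}x$ to obtain $O(x/\log_{3}x)$, and conclude via Markov's inequality. (One tiny slip: the asymptotic is $\sum_{p>N}1/p^{2}\sim 1/(N\log N)$, not $2/(N\log N)$, but only the order of magnitude matters here.)
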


\begin{proof}
This follows from \cite[Lemma 5]{DL}; for completeness, we provide a short proof here. For a prime $p\leq x$, let $\pi(x;p,1)$ be the number of primes not exceeding $x$ which are congruent to $1\mod p$. Then, by partial summation and the Brun-Titchmarsh inequality,
\begin{multline*} 
\sum_{\substack{m\leq x \\ p|\varphi(m)}}1 \leq \frac{x}{p^2}+\sum_{\substack{q\leq x \\ p| q-1}} \frac xq = \frac{x}{p^2}+\frac{\pi(x;p,1)}{x}+\int_p^x\frac{\pi(u;p,1)du}{u^2} \ll\\
\frac{x}{p^2}+\frac1p + \frac1p\int_p^x\frac{du}{u\log((2u)/p)} \ll \frac{x}{p^2}+\frac1p\int_2^{2x/p}\frac{dt}{t\log t}
\ll \frac{x}{p^2} + \frac{x\log_2 x}{p} \ll \frac{x\log_2 x}{p}.
\end{multline*}
Thus,
$$
\sum_{m\leq x}\sum_{\substack{p|\varphi(m)\\ p> \log_2 x}}\frac1p = \sum_{p> \log_2 x}\frac1p\sum_{\substack{m\leq x \\ p|\varphi(m)}}1 \ll \sum_{p> \log_2 x}\frac{x\log_2 x}{p^2} \ll \frac{x}{\log_3 x},
	$$
	and the claim now follows from Markov's inequality.
\end{proof}

We will also need the fact that, roughly speaking, for a positive proportion of $m\leq x$, the number $m+\varphi(m)$ does not have too many prime factors. 

\begin{lem}\label{lem5.4} 
Let $C>1$ be fixed and $x$ be large enough depending on $C$. Then all but $O(x/C)$ numbers $m\leq x$ with $P^+(m)>m^{1/2}$ obey
$$
\sum_{\pi | m+\varphi(m)}\frac{1}{\pi} \leq C .  
$$	
\end{lem}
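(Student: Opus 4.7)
The plan is to bound the aggregate
\[
\Sigma(x) := \sum_{\substack{m \leq x \\ P^+(m) > m^{1/2}}} \sum_{\pi \mid m+\varphi(m)} \frac{1}{\pi}
\]
by an absolute multiple of $x$; Markov's inequality will then force the number of $m$ in the outer sum for which $\sum_{\pi \mid m+\varphi(m)} 1/\pi > C$ to be at most $\Sigma(x)/C \ll x/C$. I would split the inner sum at the threshold $y := \log x$. For the primes $\pi > y$, the trivial bound $\sum_{\pi > y,\, \pi \mid n} 1/\pi \leq (\log n)/(y \log y)$ applied to $n = m+\varphi(m) \leq 2x$ gives $O(1/\log\log x)$ uniformly in $m$, which sums to $o(x)$ over $m \leq x$ and is absorbed.

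For the small primes I would prove that, for each $\pi \leq y$,
\[
N(\pi) := \#\{m \leq x : P^+(m) > m^{1/2},\ \pi \mid m+\varphi(m)\} \ll \frac{x}{\pi}.
\]
Parametrise $m = Pn$ with $P := P^+(m)$ prime and $n := m/P$; then $n \leq x^{1/2}$, $n < P \leq x/n$, and $\gcd(P,n) = 1$, so $\varphi(m) = (P-1)\varphi(n)$ and
\[
m + \varphi(m) = P(n+\varphi(n)) - \varphi(n).
\]
Hence $\pi \mid m+\varphi(m)$ is equivalent to $P(n+\varphi(n)) \equiv \varphi(n) \pmod{\pi}$. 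If $\pi \nmid n+\varphi(n)$, this confines $P$ to a single residue class coprime to $\pi$, and Brun--Titchmarsh gives at most $\ll \frac{x/n}{\pi \log((x/n)/\pi)}$ such primes $P \leq x/n$; since $\pi \leq \log x$ and $n \leq x^{1/2}$ force $\log((x/n)/\pi) \gg \log x$, summing over $n \leq x^{1/2}$ via $\sum_{n \leq x^{1/2}} 1/n \ll \log x$ contributes $O(x/\pi)$. If instead $\pi \mid n+\varphi(n)$, the congruence degenerates to $\pi \mid \varphi(n)$, forcing $\pi \mid n$; there are then at most $x^{1/2}/\pi$ such $n$, and the trivial estimate $\pi(x/n) \ll (x/n)/\log x$ for each of them again sums to $O(x/\pi)$.

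Summing $N(\pi)/\pi$ over $\pi \leq y$ yields $\ll x \sum_\pi 1/\pi^2 \ll x$, so $\Sigma(x) \ll x$ and the lemma follows from Markov's inequality. The main technical check will be ensuring that Brun--Titchmarsh is applied where it is effective, specifically that the modulus $\pi$ stays safely below $x/n$ so that $\log((x/n)/\pi)$ remains of order $\log x$; this is comfortable for $\pi \leq \log x$ and $n \leq x^{1/2}$. A subsidiary point is that the degenerate case $\pi \mid n+\varphi(n)$, where every prime $P$ automatically satisfies the congruence, still gives the right bound, because the additional constraint $\pi \mid n$ thins the sum over $n$ by precisely the factor $1/\pi$ that one otherwise saves by restricting $P$ to a residue class.
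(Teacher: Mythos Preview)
Your argument is correct and in fact somewhat cleaner than the paper's. Both proofs reduce to Markov's inequality after showing $\sum_{m}\sum_{\pi\mid m+\varphi(m)}1/\pi\ll x$, and both exploit the factorisation $m=nP$ with $P=P^+(m)$ to convert $\pi\mid m+\varphi(m)$ into a congruence condition on $P$ modulo $\pi$, then invoke Brun--Titchmarsh. The differences are organisational. The paper first passes to a subset $\Omega$ of $m$ satisfying the conclusions of Lemma~5.2 (in particular $\pi\mid\varphi(m)$ for every $\pi\le y:=\log_2 x/\log_3 x$ and $P^+(\gcd(m,\varphi(m)))\le y$); this makes the very small primes trivial, since for $m\in\Omega$ and $\pi\le y$ one has $\pi\mid m+\varphi(m)\Leftrightarrow\pi\mid m$, and it also guarantees that the degenerate case $\pi\mid n+\varphi(n)$ never occurs in the medium range $y<\pi\le x^{1/5}$. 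The paper then needs a separate trivial estimate for $\pi>x^{1/5}$. You instead split only once, at $\pi=\log x$, handle all $\pi\le\log x$ uniformly by the congruence argument, and treat the degenerate case $\pi\mid n+\varphi(n)$ directly by observing it forces $\pi\mid n$. This makes your proof self-contained, with no appeal to Lemma~5.2.

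One small inaccuracy: in the non-degenerate case the residue class for $P$ modulo $\pi$ need not be coprime to $\pi$ (it is $\varphi(n)(n+\varphi(n))^{-1}\bmod\pi$, which is $0$ when $\pi\mid\varphi(n)$). This is harmless: if the class is $0$ the only prime is $P=\pi$, and since $P>n$ this forces $n<\pi\le\log x$, contributing at most $\log x$ to $N(\pi)$, which is absorbed into $O(x/\pi)$.
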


\begin{proof}
Clearly, we may discard the numbers $m\leq x^{1/2}$. Let $\O$ denote the set of $x^{1/2}<m\leq x$ with $P^+(m)>x^{1/4}$ such that
$$
\pi | \varphi(m) \mbox{ for all } \pi\leq y
$$
and
$$
P^+(\gcd(m,\varphi(m)))\leq y.
$$	
By Lemma \ref{lem5.2}, all but $o(x)$ numbers $m\leq x$ enjoy the last two displayed properties. Thus, it is enough to show that	
	$$
	\#\Big\{m\in \O: \sum_{\pi | m+\varphi(m)}\frac{1}{\pi} >C \Big\} \ll \frac{x}{C}.
	$$	
	By Markov's inequality, it will follow from
	\begin{equation}\label{1}
		S_1+S_2+S_3 \ll x,
	\end{equation}
	where
	$$
	S_1=\sum_{m\in\O}\sum_{\substack{\pi| m+\varphi(m)\\ \pi\leq y}} \frac{1}{\pi},
	$$
	$$
	S_2=\sum_{m\in\O}\sum_{\substack{\pi| m+\varphi(m)\\ y<\pi\leq x^{1/5}}} \frac{1}{\pi},
	$$
	and
	$$
	S_3=\sum_{m\in\O}\sum_{\substack{\pi| m+\varphi(m)\\ \pi>x^{1/5}}} \frac{1}{\pi}.
	$$
	Firstly, we estimate $S_1$. Note that for $m\in \O$ and $\pi\leq y$, the condition $\pi| m+\varphi(m)$ is equivalent to $\pi|m$. Thus,
	$$
	S_1 =\sum_{\pi\leq y}\frac{1}{\pi}\#\{m\in \O: \pi|m\} \ll x\sum_{\pi}\frac{1}{\pi^2} \ll x. 
	$$
	We also trivially have
	$$
	S_3 \leq \sum_{x^{1/5}< \pi \leq 2x}\frac{\#\O}{\pi} \ll x.
	$$
	It remains to show that $S_2\ll x$, and (\ref{1}) will follow. We write $m\in \O$ as $m=lq$, where $l<x^{3/4}$ and $q=P^+(m)\in (x^{1/4},x/l]$. We note that if a prime $\pi>y$ divides $m+\varphi(m)=(l+\varphi(l))q-\varphi(l)$, then $\gcd(l+\varphi(l),\pi)=1$: indeed, otherwise $\pi$ divides both $\varphi(l)+l$ and $\varphi(l)$, and then $\pi|\gcd(m,\varphi(m))$, which contradicts the fact that $m\in \O$. Thus, the condition $\pi| (l+\varphi(l))q-\varphi(l)$ with fixed $\pi$ and $l$ means that $q$ belongs to a certain residue class, say, $a_{l,\pi}$ modulo $\pi$.  Since $q>x^{1/4}$ and $\pi<x^{1/5}$, we have by the Brun-Titchmarsh inequality
	$$
	S_2=\sum_{y<\pi\leq x^{1/5}}\frac{1}{\pi}\sum_{l<x^{3/4}} \frac{x/l}{\varphi(\pi)\log (x/(\pi l))}\ll \frac{x}{\log x}\sum_{y<\pi\leq x^{1/5}}\frac{1}{\pi^2}\sum_{l<x^{3/4}}\frac1l \ll x,
	$$
	as desired (actually the right--hand side above is even $O(x/(y\log y))=o(x)$ as $x\to\infty$). Now the claim (\ref{1}) follows from the obtained bounds for $S_1,S_2,S_3$. This concludes the proof.
\end{proof}

\medskip 

Now we are ready to prove the theorem. Let $C>0$ be a large enough constant to be chosen later. Let $B$ be the set of $m\leq x^{7/15}$ such that $m=kqr$, where
\begin{itemize}
	\item[(i)] $k\leq x^{1/60}$ and $\varphi(k)$ is divisible by $d^2$ for any $d\leq y$;
	
	\item[(ii)] $r\in(x^{1/15}, x^{1/12}]$ is a prime;
	
	\item[(iii)] $q\in(x^{7/20}, x^{11/30}]$ is a prime;
	
	\item[(iv)] $m$ obeys the properties from Lemmas \ref{lem5.1}, \ref{lem5.2}, \ref{lem5.3}, and Lemma \ref{lem5.4} with the constant $C$; 
	
\end{itemize}

\smallskip 

Now we define the set $A$ of $n\in (x/2,x]$ of the form $n=mp$, where $m\in B$ and $p\in (x/(2m), x/m]$ is a prime. Our first aim is to show that $|A|\gg x$. Note first that working only with $m$ given by (i), (ii), (iii), we have that for such $m$, $q=P^{+}(m),~r=P^+(m/P^+(m))$. Thus, given $m\in B$, we have that $r,~q$ and hence also $k$ are uniquely determined. 
Then 
\begin{equation}\label{eq:recip}
\sum_{m\in B} \frac{1}{m}=\left(\sum_{(i)} \frac{1}{k} \right)\left(\sum_{(ii)} \frac{1}{r}\right)\left(\sum_{(iii)} \frac{1}{q}\right).
\end{equation}
In the above, (i), (ii), (iii) in the ranges of mean the parameters obey the conditions indicated  at (i), (ii), (iii), respectively; we also assume that $krq$ obeys (iv). The sum (i) is $\gg \log x$, since the sum $1/k$ over $k$'s failing one of the properties from Lemma \ref{lem5.2} is $o(\log x)$. Further, the sums (ii) and (iii) are 
$\gg 1$. Thus, the sum of reciprocal $m$'s satisfying (i), (ii), (iii) is $\gg \log x$. The $m$'s failing one of the properties from Lemma \ref{lem5.1}, \ref{lem5.2}, \ref{lem5.3} have the property that the sum of their reciprocals up to $x^{7/15}$ is $o(\log x)$. So, such $m$'s can be ignored in \eqref{eq:recip} and we get that the sum of the reciprocal $m$'s satisfying (i), (ii), (iii), and (iv) is a positive proportion of $\log x$. Let $c_1$ be such that the sum \eqref{eq:recip} exceeds $c_1\log x$. Next, note that $P^{+}(m)>m^{1/2}$. Indeed, this is equivalent to $q>rk$, which holds by examining the intervals for $q,~r,~k$. By Abel's summation formula in Lemma \ref{lem5.4}, for any fixed $C$, we have that 
\begin{equation}\label{2} 
	\sum_{\substack{m\leq x \\ P^+(m)>m^{1/2}\\ \sum_{\pi| m+\varphi(m)}\frac{1}{\pi}>C }}\frac{1}{m} \ll \frac{\log x}{C}.
\end{equation}
Let $c_2$ be the constant implied by the last $\ll $ above. Thus, taking $C>2c_2/c_1$, we see that the sum of reciprocals of $m$'s satisfying (i)--(iv) and also (v) for this value of $C$ is at least $0.5c_1\log x$. Thus,
$$
\sum_{m\in B}\frac1m \gg \log x.
$$ 
Note also that since $m\leq x^{1/2-1/30}$, we have $\log (x/m) \asymp \log x$. Therefore by the Prime Number Theorem and the previous bound
\begin{equation}\label{4} 
	|A|\geq \sum_{m\in B}\left(\pi(x/m)-\pi(x/(2m))\right) \asymp \frac{x}{\log x} \sum_{m\in B}\frac1m  \asymp x,
\end{equation}
as needed.

Let
$$
E:=\#\{(n,n')\in A^2: n+\varphi(n)=n'+\varphi(n') \}.
$$
To prove the theorem, it is enough to show that
\begin{equation}\label{5} 
	E \ll x.	
\end{equation}
Indeed, then by the Cauchy-Schwarz inequality and (\ref{4})
$$
\#\{u\leq 2x: u=n+\varphi(n) \mbox{ for some } n\in A\} \geq \frac{|A|^2}{E} \gg x,
$$
and the claims follows. Further, clearly,
\begin{equation}\label{6}
	E=|A|+R,	
\end{equation}
where $R$ corresponds to the solutions with $n\neq n'$. Since $\varphi(n)=\varphi(mp)=\varphi(m)(p-1)$, we can rewrite the equation $n+\varphi(n)=n'+\varphi(n')$ with distinct $n=mp$ and $n'=m'p'$ from $A$ as
\begin{equation}\label{7} 
	(m+\varphi(m))p-\varphi(m)=(m'+\varphi(m'))p'-\varphi(m').	
\end{equation}
For a number $d\leq \log x$ with $P^+(d)\leq y$, we let 
$$
B_d:=\{m\in B: D(m,y)=d\}.
$$
The definition of the set $B$ implies that the largest $y$-smooth divisor of the left-hand side of (\ref{7}) is equal to $D(m+\varphi(m),y)=D(m,y)$, and the largest $y$-smooth divisor of its right-hand side is $D(m'+\varphi(m'),y)=D(m',y)$. Hence, the equation (\ref{7}) can hold if $m$ and $m'$ belong to $B_d$ for some $d$. Therefore, if we denote by $I_{m,m'}$ the number of solutions of (\ref{7}) in primes $p\leq x/m$ and $p'\leq x/m'$, we can write
$$
R\leq \sum_{\substack{P^+(d)\leq y\\d\leq \log x}}E_d,
$$
where
$$
E_d:=\sum_{m,m'\in B_d}I_{m,m'}.
$$
Now we see that it is enough to establish the bound
\begin{equation}\label{8} 
	E_d \ll \frac{x}{d\log y}
\end{equation}
uniformly in $d$. Once this it is done, we have
$$
E=|A|+\sum_{\substack{P^+(d)\leq y\\ d\leq \log x}}E_d \ll x+\sum_{P^+(d)\leq y }\frac{x}{d\log y} = x+\frac{x}{\log y}\prod_{p\leq y}\left(1-1/p\right)^{-1} \ll x,
$$
and (\ref{5}) and the claim follows.

\medskip

We fix a number $d\leq \log x$ with $P^+(d)\leq y$ for the rest of the proof. Our goal is to estimate $E_d$. We rewrite the equation (\ref{7}) as
\begin{equation}\label{9} 
	(m+\varphi(m))p-(m'+\varphi(m'))p'=\varphi(m)-\varphi(m').	
\end{equation}
We claim that may assume $\varphi(m)\neq \varphi(m')$ (in particular, $m\neq m'$). Indeed, since both $m$ and $m'$ are at most $x^{1/2-1/15}$, we would otherwise get
$$
(m+\varphi(m))p=(m'+\varphi(m'))p'
$$
and $p=p'$ as the largest prime divisor of this number, and so $m=m'$ and $n=n'$ which contradicts our assumption. So,
$$
E_d= \sum_{\substack{m, m'\in B_d \\ \varphi(m)\neq \varphi(m')}} I_{m,m'}.
$$ 
Let us fix any distinct $m$ and $m'$ from $B_d$. As we noted before,
$$
d=D(m,y)=D(m',y)=D(m+\varphi(m),y)=D(m'+\varphi(m'),y).
$$
Let 
$$
h:=h(m,m')=\frac{\gcd(m+\varphi(m),m'+\varphi(m'))}{d}.
$$ 
We see that all prime divisors of $h$ are greater than $y$. Now note that the equation (\ref{9}) has no solutions if its right-hand side is not divisible by $h$, and otherwise is equivalent to   
\begin{equation}\label{10} 
\frac{m+\varphi(m)}{dh}p-\frac{m'+\varphi(m')}{dh}p'=\frac{\varphi(m)-\varphi(m')}{dh}. 
\end{equation}
Let us denote
$$
M':=M'(m,m')=(m+\varphi(m))(m'+\varphi(m'))|\varphi(m)-\varphi(m')|.
$$
Selberg's sieve implies  
\begin{multline*}
	I_{m,m'} \ll \frac{x/(dh)}{((m+\varphi(m))/(dh))((m'+\varphi(m'))/(dh))(\log x)^2}\prod_{p|M'}(1+1/p) \\
	\ll  \frac{xdh}{mm'(\log x)^2}\prod_{p|\varphi(m)-\varphi(m')}(1+1/p),
\end{multline*}
since
\begin{equation}\label{10.5} 
\prod_{p|m+\varphi(m)}(1+1/p) \leq \exp\left(\sum_{p|m+\varphi(m)}\frac1p\right) \leq e^C \ll 1
\end{equation}
by the definition of the set $B$, and the same bound holds for $m'+\varphi(m')$. Therefore,
$$
E_d \ll \frac{xd}{(\log x)^2}\sum_{m,m'\in B_d} \frac{h}{mm'}\prod_{p|\varphi(m)-\varphi(m')}(1+1/p). 
$$
Clearly,
$$
\prod_{\substack{p|\varphi(m)-\varphi(m')\\ p\leq \log_2 x}}(1+1/p) \leq \prod_{p\leq \log_2 x }(1+1/p) \ll \log y
$$
and, since the number $|\varphi(m)-\varphi(m')|\leq x^{7/15}$ has $O(\log x/\log\log x)$ prime factors,
$$
\prod_{\substack{p|\varphi(m)-\varphi(m')\\ p>\log x}}(1+1/p) \leq \exp\left(\sum_{\substack{p|\varphi(m)-\varphi(m')\\ p>\log x}}\frac1p\right)  \ll 1.
$$
So, if we denote
\begin{equation}\label{11} 
	M:=M(m,m')=\prod_{\substack{p|\varphi(m)-\varphi(m')\\ \log_2 x<p\leq \log x}}(1+1/p),
\end{equation}
it is enough to show that
\begin{equation}\label{12}
	\frac{xd\log y}{(\log x)^2}\sum_{\substack{m,m'\in B_d\\m\neq m'}}\frac{hM}{mm'} \ll \frac{x}{d\log y}.
\end{equation}

Now we consider three different cases depending on how large $h=h(m,m')$ is.

\subsection*{1. The case $h>x^{1/3}$} 

Recall that $m\in B$ has the form $m=krq$ with prime $r$ and $q$, and let us denote $l=kr$. Using the bounds for $k$ and $r$, we see that $l\leq x^{1/10}$. We have $m=lq$ and 
\begin{equation}\label{13}
m+\varphi(m)=(l+\varphi(l))q-\varphi(l) \equiv 0 \pmod{h}.	
\end{equation}

We claim that $\gcd(l+\varphi(l),h)=1$. Indeed, if for some $p|h$ (and thus $p>y$) we would have $p|l+\varphi(l)$, thus $p|\varphi(l)$ by the above congruence, and hence $p|l$; but then $p$ divides both $m$ and $\varphi(m)$, which is impossible by the definition of the set $B$ (property (ii) of Lemma \ref{lem5.2}).

Thus, the above congruence means that, for a fixed $l$ and $h$, the number $q$ is contained in a certain residue class modulo $h$, say
$$
q\equiv a_{h,l} \pmod{h}.
$$
Similarly, $\gcd(l'+\varphi(l'),h)=1$ and,  for a fixed $l'$, we have $q'\equiv a_{h,l'} \pmod{h}$. 

Since $h$ divides all three numbers $\varphi(m)-\varphi(m')$, $m+\varphi(m)$, and $m'+\varphi(m')$, we have $m\equiv m'\pmod{h}$. Then (\ref{13}) implies
$$
\frac{l\varphi(l)}{l+\varphi(l)} \equiv lq \equiv l'q'\equiv \frac{l'\varphi(l')}{l'+\varphi(l')} \pmod{h},
$$
or
$$
(l'+\varphi(l'))l\varphi(l) - (l+\varphi(l))l'\varphi(l') \equiv 0 \pmod{h}.
$$
But since $\max\{l,l'\}\leq x^{1/10}$, the absolute of the left-hand side is at most $2x^{3/10}<x^{1/3}<h$. It means that
$$
\frac{l\varphi(l)}{l+\varphi(l)}=\frac{l'\varphi(l')}{l'+\varphi(l')}.
$$
Now recall that $l=kr$ and $l'=k'r'$, where $r=P^+(l)$ and $r'=P^+(l')$. Clearly, $r$ does not divide $\varphi(l)=\varphi(k)(r-1)$ and $l+\varphi(l)=(k+\varphi(k))r-\varphi(k)$, since $k$ is small. Thus, if we write
$$
\frac{l\varphi(l)}{l+\varphi(l)}=\frac{a}{b}=\frac{l'\varphi(l')}{l'+\varphi(l')}
$$
with $\gcd(a,b)=1$, then $r=r'=P^+(a)$. Besides, since $d$ is $y$-smooth, $d^2$ divides $\varphi(k)$ by the properties (i) and (iii) of Lemma \ref{lem5.2}, and hence, $d^2$ divides $\varphi(l)$ as well. Then it is not hard to see that $\gcd(l\varphi(l),l+\varphi(l))=d$, so actually 
$$
b=(l+\varphi(l))/d=(l'+\varphi(l'))/d.
$$  
So, we have $r=r'$ and 
$$ 
l+\varphi(l)=(k+\varphi(k))r-\varphi(k)=(k'+\varphi(k'))r-\varphi(k')=
l'+\varphi(l').
$$
Thus,
$$
(k+\varphi(k)-k'-\varphi(k'))r=\varphi(k)-\varphi(k'),
$$
so $r$ divides $\varphi(k)-\varphi(k')$ which implies first that $\varphi(k)=\varphi(k')$ (because $|\varphi(k)-\varphi(k')|$ is much smaller than $r$) and next that  $k+\varphi(k)=k'+\varphi(k')$; that is, $k=k'$. 

So, we get $l=l'$ if $h(m,m')>x^{1/3}$. This means that $q\equiv q' \pmod{h}$. Since $l=l'$, we have $q\neq q'$; without loss of generality we assume that $q>q'$. Then using the trivial bound for $M$ and recalling that $d\leq \log x$, we get
$$
W_1:=\frac{xd\log y}{(\log x)^2}\sum_{\substack{m,m'\in B_d:\\m\neq m',\, h(m,m')>x^{1/3}}}\frac{hM}{mm'} \ll x^{1+o(1)}\sum_{l, q', h, q}\frac{h}{l^2q'q}.
$$ 
Now for fixed $l$, $q'$, and $h \,| \, (l+\varphi(l))q-\varphi(l) = m'+\varphi(m')$,
the sum of $1/q$ over $q\equiv a_{h,l} \pmod{h}$ is at most $O((\log x)/h)$, since $q>q'$ and $q'\equiv a_{h,l} \pmod{h}$ imply that we in fact sum values of $q$ that are greater than $h$ in an arithmetic progression modulo $h$. Since for fixed $l$ and $q'$ there are at most $\tau( (l+\varphi(l))q-\varphi(l)) \leq x^{o(1)}$ possible values of $h$, we get
$$
W_1 \ll x^{1+o(1)}\sum_{l,q'}\frac{1}{q'l^2}.
$$
Now the sum of $1/q'$ is $O(1)$ and $\sum_{l}1/l^2=\sum_{k}1/k^2\sum_r1/r^2 \ll x^{-1/15}$ because of the range of $r$. Since $d \leq \log x$, we obtain
$$
W_1 \ll  x^{14/15+o(1)} \ll \frac{x}{d\log y},
$$
as desired.

\subsection*{2. The case $x^{1/20}<h\leq x^{1/3}$} 

Recall that we have $m=krq$ and
$$
m\equiv -\varphi(m) \equiv -\varphi(m') \equiv m' \pmod{h}.
$$
Note also that $\gcd(m,\varphi(m))=d$ for $m\in B_d$ by the properties from Lemma \ref{lem5.2}. Since all prime factors of $h$ are greater than $y$ and $h$ divides $m+\varphi(m)$, we thus have $\gcd(k,h)=\gcd(\varphi(k),h)=1$. So, the pair of congruences 
$$
qrk \equiv -(q-1)(r-1)\varphi(k)\equiv m' \pmod{h} 
$$
determine $u:=qr$ and $v:=q+r$ when $k$ and $m'$ are given. Since all prime factors of $h$ are odd and the congruence $x^2\equiv a \pmod{p^{\alpha}}$ has at most two solutions for any odd prime power $p^{\a}$, there are at most $2^{\o(h)}\leq \tau(h)$ solutions of the congruence $t^2-vt+u\equiv 0 \pmod{h}$. So there are at most $\tau(h)$ pairs $(a,b) \pmod h$ for which $q\equiv a\pmod h$ and $r\equiv b\pmod h$ are possible. Let $S(m',k,h)$ denote the set of all such possible pairs $(a,b)$. Now we note that for fixed $m', k, h$ and $(a,b)\in S(m',k,h)$ we get, by the Brun-Titchmarch inequality and the fact that $q>x^{7/20}\geq h^{1+\delta}$ for some small $\delta$ (say, $\delta=1/60$ works), 
$$
\sum_{q\equiv a \pmod h}\frac1q \ll \frac{1}{\varphi(h)}\ll \frac1h;
$$
the last inequality here is due to the fact that $h$ divides $m+\varphi(m)$ and (\ref{10.5}). Also, 
$$
\sum_{r \equiv b\pmod h} \frac1r \ll \frac{1}{x^{1/15}}+\frac{\log x}{h} \ll \frac{\log x}{x^{1/20}} \, . 
$$
In the above, $1/x^{1/15}$ accounts for the smallest term of the progression $b\pmod h$ which could be smaller than $h$ but must be a prime $r>x^{1/15}$ and $(\log x)/h$ accounts for the remaining sum even ignoring the condition that such $r$'s are prime. 
Therefore, again using the trivial upper bounds for $d$ and $M$, we obtain
\begin{multline*}
	W_2:=\frac{x\log y}{(\log x)^2}\sum_{\substack{m,m'\in B_d\\m\neq m',\, x^{1/20}<h(m,m')\leq x^{1/3}}}\frac{dhM}{mm'} \ll \\ x^{1+o(1)}\sum_{m',h,k}\frac{h}{m'k}\sum_{(a,b)\in S(m',k,h)}\sum_{r \equiv b\pmod h} \frac1r\sum_{q\equiv a \pmod h}\frac1q \ll \\
	x^{19/20+o(1)}\sum_{m',h,k}\frac{\tau(h)}{m'k}\ll x^{19/20+o(1)}\sum_{m'\in B_d}\frac{1}{m'}\sum_{h| m'+\varphi(m')}\tau(h) \ll \\
	x^{19/20+o(1)}\sum_{m'\in B}\frac{\tau(m'+\varphi(m'))^2}{m'} \ll x^{19/20+o(1)}\ll \frac{x}{d\log y},
\end{multline*}
as desired.

\bigskip 

\subsection*{3. The case $h\leq x^{1/20}$.} 

We will argue as in the beginning of previous case, but now pay attention to $M(m_1,m_2)$. By the properties stated in Lemmas \ref{lem5.3} and \ref{lem5.4} and the inequality $1+t\leq e^t$, we see that
$$
\prod_{\substack{\pi|\varphi(m)(m'+\varphi(m')) \\ \pi>\log_2 x}}(1+1/\pi) \ll 1.
$$ 
Thus, 
$$
M(m,m') \ll \prod_{\substack{\pi|\varphi(m)-\varphi(m')\\ \log_2 x<\pi\leq \log x \\ \pi \,\nmid\, \varphi(m)(m'+\varphi(m')}}(1+1/\pi).
$$
Now we fix $m',k,h$ and a pair $(a,b)\in S(m',k,h)$. Let us define
$$
f(qr)=f_{m',h,k,a,b}(qr)=\sum_{\substack{\pi |\varphi(m)-\varphi(m')\\ \log_2 x<\pi \leq \log x \\ \pi \,\nmid\, \varphi(m)(m'+\varphi(m')}} \frac{1}{\pi}.
$$
We will show that the main contribution to (\ref{12}) comes from $qr$ for which $f(qr)\leq 1$; note that in this case $M(m,m') \ll 1$.

Note that if for a fixed $r$ we have $\pi|\varphi(kqr)-\varphi(m')$, then 
$$
q\varphi(kr) \equiv \varphi(kr)+\varphi(m') \pmod \pi; 
$$
since $\pi$ does not divide $\varphi(kr)$, we see that $q$ belongs to some residue class modulo $h$, say $c_{\pi,kr,m'} \pmod \pi$. Note also that $h | m'+\varphi(m')$ and $\pi$ does not divide $m'+\varphi(m')$, and thus $\pi$ and $h$ are coprime. Since we also assume $q\equiv a \pmod h$  and $r\equiv b \pmod h$, under the above assumptions $q$ belongs to a certain residue class modulo $\pi h\leq x^{1/20}\log x$. Therefore, by the Brun-Titchmarsh inequality and the fact that $\varphi(h)\gg h$ (which follows from $\varphi(m'+\varphi(m'))\gg m'+\varphi(m')$, since $h$ divides $m'+\varphi(m')$),
\begin{equation}\label{14}
	\sum_{q,r}\frac{f(qr)}{qr} \ll \sum_{\pi}\frac{1}{\pi}\sum_{r}\frac1r\sum_q\frac1q \ll \sum_{\pi}\frac{1}{\pi\varphi(\pi h)}\sum_{r}\frac1r \ll \frac{1}{h^2}\sum_{\pi>\log_2 x}\frac{1}{\pi^2} \ll \frac{1}{h^2\log_2 x\log y}.	
\end{equation}
Now we bound $M(m,m')$ by $O(1)$ if $f(qr)\leq 1$ and by $(\log_2 x)/\log y$ otherwise; this is a trivial bound as 
$$
M\le \exp\left(\sum_{\log_2 x < \pi<\log x} \frac{1}{\pi}\right)=\exp(\log_3 x-\log_4 x+o(1))\ll \exp\left(\log\left(\frac{\log_2 x}{\log y}\right)\right)\ll \frac{\log_2 x}{\log y}.
$$
We get
\begin{multline*}
W_3:=\frac{xd\log y}{(\log x)^2}\sum_{\substack{m,m'\in B_d:\\m\neq m', \, h\leq x^{1/20}}}\frac{hM}{mm'} \ll \\
\frac{xd\log y}{(\log x)^2} \sum_{m',k,h}\frac{h}{m'k}\sum_{(a,b)\in S(m',k,h)} \left(\sum_{\substack{q,r \\ f(qr)\leq 1}}\frac{1}{qr}+ \sum_{\substack{q,r \\ f(qr)>1}}\frac{\log_2 x}{qr\log y}\right), 
\end{multline*}
where the inner sums are over $r\equiv b \pmod h$ and $q \equiv a \pmod h$. 
From (\ref{14}) we obtain
$$
\sum_{\substack{q,r \\ f(qr)>1}}\frac{\log_2 x}{qr\log y} \leq \frac{\log_2 x}{\log y}\sum_{q,r}\frac{f(qr)}{qr} \ll \frac{1}{h^2(\log y)^2},
$$
and thus
$$
\sum_{\substack{q,r \\ f(qr)\leq 1}}\frac{1}{qr} \ll \frac{1}{h^2}.
$$
Since $|S(m',k,h)|\leq \tau(h)$, we have
$$
W_3 \ll \frac{xd\log y}{(\log x)^2} \sum_{m',k}\frac{1}{m'k} \sum_{h| m'+\varphi(m')}\frac{\tau(h)}{h}.
$$
Now
\begin{multline*}
\sum_{h|m'+\varphi(m')}\frac{\tau(h)}{h} =\sum_{h|m'+\varphi(m')}\frac1h\sum_{d|h}1=\sum_{d|m'+\varphi(m')}\sum_{\substack{h|m'+\varphi(m')\\m\equiv 0\pmod d}}\frac1h\\
=\sum_{d|m'+\varphi(m')}\frac1d\sum_{u|(m'+\varphi(m'))/d}\frac1u\leq \left(\sum_{h|m'+\varphi(m')}\frac1h\right)^2 \ll e^{2C} \ll 1
\end{multline*}
due to (\ref{10.5}). Hence,
$$
W_3 \ll \frac{xd\log y}{(\log x)^2}\sum_{m'\in B_d}\frac{1}{m'}\sum_k\frac1k,
$$
where the latter summation is over $k\leq x^{1/60}$ with $D(k,y)=d$. We have 
$$
\sum_{m'\in B_d}\frac{1}{m'}\leq \frac1d\sum_{\substack{t\leq x^{7/15}/d\\ P^-(t)>y}}\frac1t \ll \frac1d\prod_{y<p\leq x^{7/15}}\left(1+\frac1p+\frac{1}{p^2}+\cdots\right) \ll \frac{\log x}{d\log y}.
$$ 
Similarly, the sum $1/k$ over $k$ which occur from $m\in B_d$ is $O((\log x)/(d\log y))$. Thus
$$
W_3 \ll \frac{x}{d\log y},
$$ 
as desired. Now (\ref{12}) follows and the lower bound in the theorem is proved.

\subsection{Proof of the upper bound}

Now we prove the upper bound. We will need the fact the estimate
\begin{equation}\label{5.5}
	\Phi_x(\l)=\Phi(\l)+O\left(\frac{1}{\log x}\left(\frac{\log\log x}{\log\log\log x}\right)^2\right)
\end{equation}
holds uniformly on $\l\in[0,1]$ (see \cite[\S4.8]{P}, and also \cite{F1}, \cite{F2}). 

The main observation here is that if $k>\l x$ and $\varphi(k)/k>(1-\l)/\l$, then $k+\varphi(k)>x$. Let us take numbers $\{\l_i\}_{i=1}^r$ such that 
$$
1/2<\l_1<\cdots <\l_r<1
$$ 
and $\l_i-\l_{i-1}$ is of order $1/\log\log x$ (say) for each $i$; thus, $r\asymp \log\log x$. Then, writing $\nu_i=(1-\l_i)/\l_i$ and using (\ref{5.5}), we get
\begin{multline*}
	x-N^+_{\varphi}(x) \geq \sum_{i=2}^r\#\{\l_{i-1}x< k\leq \l_ix: \varphi(k)/k>\nu_{i-1} \} \\
	= \sum_{i=2}^r\bigg(\l_ix(1-\Phi_{\l_ix}(\nu_{i-1})) - \l_{i-1}x(1-\Phi_{\l_{i-1}x}(\nu_{i-1}))\bigg)\\
	=\sum_{i=2}^r\bigg(\l_ix(1-\Phi(\nu_{i-1})) - \l_{i-1}x(1-\Phi(\nu_{i-1}))\bigg)+O\left(\frac{rx}{(\log x)^{1-o(1)}}\right)\\
	=x\sum_{i=2}^r(\l_i-\l_{i-1})(1-\Phi(\nu_{i-1}))+O\left(\frac{x}{(\log x)^{1-o(1)}}\right)\\
	=x\int_{1/2}^1\left(1-\Phi\left(\frac{1-\l}{\l}\right)\right)d\l+o(x)=x\left(1/2-\int_0^1\frac{\Phi(t)dt}{(1+t)^2}\right)+o(1)).
\end{multline*}
So,
\begin{equation*}
N^+_{\varphi}(x)\leq x\left(1/2+\int_0^1\frac{\Phi(t)dt}{(1+t)^2}+o(1)\right).
\end{equation*}

This concludes the proof.

\end{document}